\theoremstyle{plain}
\newtheorem{satz}{Theorem}[section]
\newtheorem*{Satz}{Main Result}
\newtheorem{lem}[satz]{Lemma}
\newtheorem{prop}[satz]{Proposition}
\newtheorem{con}[satz]{Conjecture}
\theoremstyle{definition}
\newtheorem{defi}[satz]{Definition} 
\newtheorem{bsp}[satz]{Example}
\theoremstyle{remark}
\newtheorem{bem}[satz]{Remark} 
\newtheorem*{Bem}{Remark} 
\numberwithin{equation}{section}
\newcommand{\floor}[1]{\left\lfloor#1\right\rfloor}
\newcommand{\maxk}[1]{\left\{#1\right\}}
\newcommand{\erz}[1]{\langle#1\rangle}
\newcommand{\La}{\lambda}
\newcommand{\LA}{\Lambda}
\DeclareMathOperator{\reg}{reg}
\DeclareMathOperator{\codim}{codim}
\DeclareMathOperator{\hilb}{Hilb}
\DeclareMathOperator{\var}{var}
\DeclareMathOperator{\supp}{supp}
\DeclareMathOperator{\spann}{span}
\DeclareMathOperator{\red}{r}
\DeclareMathOperator{\boxx}{Box}
\DeclareMathOperator{\height}{ht}
\begin{document}

\title[C-M regularity of seminormal simplicial affine semigroup rings]{Castelnuovo-Mumford regularity of seminormal simplicial affine semigroup rings}
\author{Max Joachim Nitsche}
\address{Max\mbox{\;}Planck\mbox{\;}Institute\mbox{\;}for\mbox{\;}Mathematics\mbox{\;}in\mbox{\;}the\mbox{\;}Sciences,\mbox{\;}Inselstrasse\mbox{\;}22,\mbox{\;}04103\mbox{\;}Leipzig,\mbox{\;}Germany}
\email{nitsche@mis.mpg.de}
\thanks{}
\date{\today}
\keywords{Castelnuovo-Mumford regularity, Eisenbud-Goto conjecture, reduction number, affine semigroup rings, seminormal rings, full Veronese rings.}

\subjclass[2010]{Primary 13D45; Secondary 13F45.}

\begin{abstract}

We show that the Eisenbud-Goto conjecture holds for (homogeneous) seminormal simplicial affine semigroup rings. Moreover, we prove an upper bound for the Castelnuovo-Mumford regularity in terms of the dimension, which is similar as in the normal case. Finally, we compute explicitly the regularity of full Veronese rings.

\end{abstract}

\maketitle

\section{Introduction}

Let $K$ be a field, and let $R=K[x_1,\ldots,x_n]$ be a standard graded polynomial ring, that is, all variables $x_i$ have degree $1$. Let $M$ be a finitely generated graded $R$-module. By \mbox{$H^i_{{R}_+}(M)$} we denote the $i$-th local cohomology module of $M$ with respect to the homogeneous maximal ideal $R_+$ of $R$, and we set \mbox{$a(H^i_{R_+}(M)):=\max\maxk{r\mid H^i_{R_+}(M)_r\not=0}$} with the convention \mbox{$a(0)=-\infty$}. The \emph{Castelnuovo-Mumford regularity} (or \emph{regularity} for short) $\reg M$ of $M$ is defined by
$$
\reg M:=\max\maxk{i+a(H^i_{R_+}(M))\mid i\geq0}.
$$
The regularity $\reg M$ is an important invariant, for example, the $i$-th syzygy module of $M$ can be generated by elements of degree smaller or equal to $\reg M+i$, moreover, one can use the regularity of a homogeneous ideal to bound the degrees in certain minimal Gr\"obner bases; for more information we refer to the paper of Eisenbud and Goto \cite{SEG}, and to Bayer and Stillman \cite{SBSCDM}. So it is natural to ask for bounds for the regularity of a homogeneous ideal $I$ of $R$; note that $\reg I=\reg R/I+1$. Denote by $\codim R/I:=\dim_{K}[R/I]_{1}-\dim R/I$ the codimension of $R/I$ and by $\deg R/I$ its degree. An open conjecture is
\begin{con}[Eisenbud-Goto \cite{SEG}]\label{Segg}
If $K$ is algebraically closed and $I$ is a homogeneous prime ideal of $R$, then
\[
\reg R/I\leq\deg R/I-\codim R/I.
\]
\end{con}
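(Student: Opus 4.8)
The plan is to argue by induction on the Krull dimension $d=\dim R/I$, descending by generic hyperplane sections to the classical case of projective curves. Since $I$ is prime with $d\geq 1$, the ring $R/I$ is a positive-dimensional domain, so $\operatorname{depth}R/I\geq 1$ and a generic linear form $\ell\in R_1$ is a nonzerodivisor on $R/I$. For such $\ell$ one has $\reg R/(I+\ell R)=\reg R/I$ via the short exact sequence associated to multiplication by $\ell$, while cutting by a nonzerodivisor hyperplane preserves degree, so $\deg R/(I+\ell R)=\deg R/I$, and the codimension inside the smaller polynomial ring $R/\ell R\cong K[x_1,\dots,x_{n-1}]$ is again $n-d=\codim R/I$. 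Because $K$ is algebraically closed, Bertini's theorem guarantees that for $d\geq 3$ (projective dimension $\geq 2$) the section $V(I)\cap V(\ell)$ is geometrically integral, so its defining prime $J$ falls under the inductive hypothesis, and the numerical data $\reg$, $\deg$, $\codim$ are all preserved along the descent.

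The base cases are points and curves. For a finite nondegenerate set of points ($d=1$) the inequality is elementary. The decisive input is the curve case $d=2$, where one invokes the theorem of Gruson, Lazarsfeld and Peskine: for an integral nondegenerate curve $C\subset\mathbb{P}^{n-1}$ over an algebraically closed field one has $\reg\mathcal{I}_C\leq\deg C-\codim C+1$, which translated to the coordinate ring reads $\reg R/I\leq\deg R/I-\codim R/I$, exactly the asserted bound. Thus, provided the inductive descent can be carried out, the conjecture would follow from this one deep theorem together with the trivial low-dimensional cases.

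The main obstacle lies entirely in the inductive step, in comparing the section that one can control cohomologically with the one to which the hypothesis applies. The form $\ell$ controls $R/(I+\ell R)$, but this ideal is in general neither saturated nor radical, whereas the inductive hypothesis concerns the prime $J$ defining the reduced section; bridging the gap between $\reg R/(I+\ell R)$ and $\reg R/J$, and between their degrees, is precisely what the numerical invariants $\deg$ and $\codim$ do not govern. The difficulty is sharpened in positive characteristic, where the reducedness and nondegeneracy required by the Gruson--Lazarsfeld--Peskine input become delicate, and by the phenomenon that the regularity of a variety may exceed that of a generic hyperplane section by an amount the degree cannot detect. An alternative route, which also explains the appearance of the reduction number in this circle of ideas, would be to treat the arithmetically Cohen--Macaulay case separately, where $\reg R/I$ is controlled by a reduction number of a minimal reduction and the problem becomes one of bounding that number by $\deg R/I-\codim R/I$; but already this special case is subtle, and the Cohen--Macaulay hypothesis is destroyed under sectioning as readily as primeness is. Since no general mechanism is known to keep the regularity discrepancy across a hyperplane section bounded, I expect the inductive passage, rather than the curve base case, to be the true heart of the matter, and it is exactly here that the conjecture resists a proof in full generality.
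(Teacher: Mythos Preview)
The statement you are attempting to prove is labelled \emph{Conjecture} in the paper, not a theorem; the paper gives no proof of it and explicitly calls it ``widely open''. The paper's contribution is to establish the inequality only in the special case of \emph{seminormal simplicial} affine semigroup rings (Theorems~\ref{Sd-1} and~\ref{Segsn}), by entirely different, combinatorial means: one decomposes $K[B]$ into monomial ideals over a polynomial subring, applies the Hoa--Trung bound $\reg I\le\var(I)-\height I+1$, and then builds explicit systems of pairwise inequivalent elements in $B_A$ to bound the number of equivalence classes from below. There is thus no ``paper's own proof'' of the full conjecture to compare your proposal against.

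As for the proposal itself: it is not a proof but a sketch of the hyperplane-section strategy together with an honest account of why it breaks down. You correctly locate the obstruction---the discrepancy between $\reg R/(I+\ell R)$ and $\reg R/J$ for the saturated (or prime) ideal $J$ of the section is exactly what the method cannot control. That gap is not a technicality one could hope to patch: the Eisenbud--Goto conjecture in the generality stated here is now known to be \emph{false} (counterexamples of McCullough and Peeva, \emph{J.\ Amer.\ Math.\ Soc.}\ 31 (2018), 473--496), so any purported general proof necessarily contains an irreparable error. Your instinct that the inductive step, rather than the curve base case, is the heart of the difficulty was correct.
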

By a result of Gruson, Lazarsfeld, and Peskine \cite{SGLP} Conjecture~\ref{Segg} holds if $\dim R/I=2$. The Cohen-Macaulay case was proven by Treger \cite{SEGCM}, and the Buchsbaum case by St\"uckrad and Vogel \cite{SEGBB}. Conjecture~\ref{Segg} also holds if $\deg R/I\leq \codim R/I+2$ by a result of Hoa, St\"uckrad, and Vogel \cite{SHSVC2}, and in characteristic zero for smooth surfaces by Lazarsfeld \cite{SLSMSD3} and for certain smooth threefolds by Ran \cite{SRLDG}. Moreover, Giaimo \cite{SDGEGCC} showed that the conjecture still holds for connected reduced curves.\\

\pagebreak

Since the Eisenbud-Goto conjecture is widely open, it would be nice to prove it for more cases; in the following we will consider homogeneous simplicial affine semigroup rings. A~semigroup is called \emph{affine} if it is finitely generated and isomorphic to a submonoid of $(\mathbb Z^m,+)$ for some $m\in\mathbb N^+$. Let $B$ be an affine semigroup. The \emph{affine semigroup ring} $K[B]$ associated to $B$ is defined as the $K$-vector space with basis $\{t^b\mid b\in B\}$ and multiplication given by the $K$-bilinear extension of $t^a\cdot t^b=t^{a+b}$. Since $B$ is an affine semigroup we have $G(B)\cong\mathbb Z^m$ for some $m\in\mathbb N$; where $G(B)$ denotes the group generated by $B$. Hence $G(B)\otimes_\mathbb Z\mathbb R$ is a finite dimensional $\mathbb R$-vector space with canonical embedding $G(B)\subseteq G(B)\otimes_\mathbb Z\mathbb R$ given by $x\mapsto x\otimes1$. We say that $B$ is \emph{simplicial} if the corresponding cone $C(B)$ is generated by linearly independent elements, where $C(X):=\{\sum_{i=1}^k r_ix_i\mid k\in\mathbb N^+, r_i\in\mathbb R_{\geq0}, x_i\in X\}$ for $X\subseteq G(B)\otimes_\mathbb Z\mathbb R$. An element $x\in B$ is called a \emph{unit} if $-x\in B$. We say that $B$ is \emph{positive} if $0$ is its only unit. In this case, the \emph{Hilbert basis} $\hilb(B)$, that is, the set of irreducible elements of $B$, is a unique minimal generating set of $B$; an element $x\in B$ is called \emph{irreducible} if it is not a unit and if for $x=y+z$ with $y,z\in B$ it follows that $y$ or $z$ is a unit. Moreover, we say that $B$ is \emph{homogeneous} if $B$ is positive and there is a positive $\mathbb Z$-grading on $K[B]$ in which every $t^b$ for $b\in\hilb(B)$ has degree $1$. See \cite[Chapter~2]{SBGPRKT}. In the following we will assume that $B$ is homogeneous. We will always consider the above $\mathbb Z$-grading on $K[B]$, moreover, by $\reg K[B]$ we mean the regularity of $K[B]$ with respect to the canonical $R$-module structure which is induced by the homogeneous surjective $K$-algebra homomorphism
$$
\pi: R=K[x_1,\ldots,x_{n}]\twoheadrightarrow K[B],
$$
given by $x_i\mapsto t^{a_i}$; where $\hilb(B)=\{a_1,\ldots,a_{n}\}$. Hence $R/\ker\pi\cong K[B]$, where $\ker\pi$ is a homogeneous prime ideal of $R$. In case that $B$ is simplicial we will also call $K[B]$ a simplicial affine semigroup ring.\\

By extending the ground field if necessary, (the inequality in) Conjecture~\ref{Segg} holds for $K[B]$ in particular if $\dim K[B]=2$, if $K[B]$ is Buchsbaum, and if \mbox{$\deg K[B]\leq \codim K[B]+2$}. The conjecture also holds if $\codim K[B]=2$ by Peeva and Sturmfels \cite{Scodim2} and for simplicial affine semigroup rings with isolated singularity by Herzog and Hibi \cite{SCMHH}. In \cite[Theorem~3.2]{SHSCM}, Hoa and St"uckrad presented a very good bound for the regularity of simplicial affine semigroup rings, moreover, they provided some cases where Conjecture~\ref{Segg} holds in the simplicial case. However, the Eisenbud-Goto conjecture is still widely open even for simplicial affine semigroup rings. In case that $B$ is simplicial and seminormal (see Definition~\ref{Sdefsemi}) we can confirm the Eisenbud-Goto conjecture for $K[B]$, we obtain the following

\begin{Satz}[Theorem~\ref{Sd-1}, Theorem~\ref{Segsn}]\label{Smainresult}

Let $K$ be an arbitrary field and let $B$ be a homogeneous affine semigroup. If $B$ is simplicial and seminormal, then
$$
\reg K[B]\leq \min\maxk{\dim K[B]-1, \deg K[B]-\codim K[B]}.
$$

\end{Satz}

This result is more or less well known if $B$ is normal (see Definition~\ref{Sdefsemi}), since $K[B]$ is Cohen-Macaulay in this case; see \cite[Theorem~1]{SMHCM}, \cite[Theorem~6.10]{SBGPRKT}, and Remark~\ref{Snormal}. In fact, the ring $K[B]$ is not necessary Buchsbaum if $B$ is simplicial and seminormal, see Example~\ref{Sdreimehr}. To prove Conjecture~\ref{Segg} in the seminormal simplicial case we will~use~an idea of Hoa and St\"uckrad and decompose the ring $K[B]$ into a direct sum of certain monomial ideals. This becomes even more powerful in this case, since seminormality of simplicial affine semigroup rings can be characterized in terms of the decomposition by a result of Li \cite{SPHDPL}.\\

In Section~\ref{Sbasics} we will recall the decomposition of simplicial affine semigroup rings. Moreover, we will introduce sequences with $*$-property which will be useful to prove the main result in Section~\ref{Sseminormalcase}. Finally, we will compute explicitly the Castelnuovo-Mumford regularity of full Veronese rings in Section~\ref{Sfullveronese}. We set $M_{d,\alpha}:=\{(u_{1},\ldots,u_{d})\in\mathbb N^d \mid \sum_{i=1}^{d}u_{i}=\alpha\}$ where $d,\alpha\in\mathbb N^+$, moreover, we define $B_{d,\alpha}$ to be the submonoid of $(\mathbb N^d,+)$ which is generated by $M_{d,\alpha}$. In Theorem~\ref{Sberechnung} we will show that \mbox{$\reg K[B_{d,\alpha}] = \floor{d - \frac{d}{\alpha}}$}. For a general consideration of seminormal rings we refer to \cite{SSOS, STSPG}, and for unspecified notation to \cite{SBGPRKT, SEB}.

\section{Basics}\label{Sbasics}

In the following we will assume that the homogeneous affine semigroup $B$ is simplicial, that is, there are linearly independent elements $e_1,\ldots,e_d\in C(B)$ such that $C(B)=C(\{e_1,\ldots,e_d\})$. Without loss of generality we may assume that ${e_1,\ldots,e_d}\in \hilb(B)$. Consider the $\mathbb R$-vector space isomorphism \mbox{$\varphi:\spann(\{e_1,\ldots,e_d\})\rightarrow\mathbb R^d$} where $e_i$ is mapped to the element in $\mathbb N^d$ all of whose coordinates are zero except the $i$-th coordinate which is equal to $\alpha$ for some $\alpha\in\mathbb N^+$, that is, $\varphi(e_i)=(0,\ldots,0,\alpha,0,\ldots,0)$. By construction we have $\varphi(B)\subseteq \mathbb R^d_{\geq0}$, since $C(B)=C(\{e_1,\ldots,e_d\})$, hence $\varphi(B)\subseteq \mathbb Q^d_{\geq0}$ by the Gaussian elimination. Thus, by choosing a suitable $\alpha$ we may assume that $\varphi(\hilb(B))\subset \mathbb N^d$, or equivalently, $\varphi(B)\subseteq \mathbb N^d$. The affine semigroup $\varphi(B)$ is again homogeneous, it follows that the coordinate sum of all elements of $\varphi(\hilb(B))$ is equal to $\alpha$, see \cite[Proposition~2.20]{SBGPRKT}. Note that we can compute $\reg K[B]$ in terms of $H^i_{K[{B}]_+}(K[B])$, since $H^i_{R_+}(K[B])\cong H^i_{K[{B}]_+}(K[B])$, see \cite[Theorem~13.1.6]{SBSLC}; where $K[B]_+$ denotes the homogeneous maximal ideal of $K[B]$. The isomorphism $B\cong\varphi(B)$ of semigroups induces an isomorphism of $\mathbb Z$-graded rings $K[B]\cong K[\varphi(B)]$. This enables us to identify a homogeneous simplicial affine semigroup $B$ with its image $\varphi(B)$ in $\mathbb N^d$. Thus, we may assume that $B$ is the submonoid of $(\mathbb N^d,+)$ which is generated by a set $\{e_1,\ldots,e_d,a_1,\ldots,a_c\}\subseteq M_{d,\alpha}$, where
$$
e_1:=(\alpha,0,\ldots,0), e_2:=(0,\alpha,0,\ldots,0),\ldots,e_d:=(0,\ldots,0,\alpha).
$$
Let $a_i=(a_{i[1]},\ldots,a_{i[d]})$; since $\alpha\in\mathbb N^+$ can be chosen to be minimal, we may assume that the integers $a_{i[j]},\:i=1,\ldots,c,\:j=1,\ldots,d$, are relatively prime. Moreover, we assume that $c\geq1$, since the case $c=0$ is not relevant in our context. Note that $K$ is an arbitrary field, $\dim K[B]=d$, and $\codim K[B]=c$. Our notation tries to follow the notation in \cite{SHSCM}.\\

By $x_{[i]}$ we denote the $i$-th component of $x$ and $\deg x:=(\sum_{j=1}^dx_{[j]})/\alpha$, for $x\in G(B)$. We define $A:=\erz{e_1,\ldots,e_d}$ to be the submonoid of $B$ generated by $e_1,\ldots,e_d$, and we set
$$
B_A:=\{x\in B\mid x-a\notin B~\forall a\in A\setminus\{0\}\}.
$$ 
Note that $B_A$ is finite. Moreover, if $x\notin B_A$ then $x+y\notin B_A$ for all $x,y\in B$. We define $x\sim y$ if $x - y\in G(A)=\alpha\mathbb Z^d$, thus, $\sim$ is an equivalence relation on $G(B)$. Every element of $G(B)$ is equivalent to an element of $G(B)\cap D$, where $D:=\{(x_{[1]},\ldots,x_{[d]})\in\mathbb Q^d\mid 0\leq x_{[i]}<\alpha~\forall i\}$ and for all $x,y\in G(B)\cap D$ with $x\not=y$ we have $x\not\sim y$. Hence the number of equivalence classes $f:=\#(G(B)\cap D)$ on $G(B)$ is finite, moreover, there are also $f$ equivalence classes on $B$ and on $B_{A}$. By $\Gamma_1,\ldots,\Gamma_f$ we denote the equivalence classes on $B_{A}$. For $t=1,\ldots,f$ we define
$$
h_t:=(\min\maxk{m_{[1]}\mid m\in \Gamma_t}, \min\maxk{m_{[2]}\mid m\in \Gamma_t},\ldots,\min\maxk{m_{[d]}\mid m\in \Gamma_t}).
$$
Note that $x-h_t\in A$ for all $x\in\Gamma_t$. This shows that $h_t\in G(B)\cap \mathbb N^d$. Let $T:=K[y_1,\ldots,y_d]$ be a standard graded polynomial ring, that is, all variables $y_i$ have degree $1$. We define $\tilde \Gamma_t :=\{y^{(x-h_t)/\alpha}\mid x\in\Gamma_t\}$, where $u/\alpha:=(u_{[1]}/\alpha,\ldots,u_{[d]}/\alpha)$ and $y^{u}:=y_1^{u_{[1]}}\cdot\ldots\cdot y_d^{u_{[d]}}$ for $u=(u_{[1]},\ldots,u_{[d]})\in\mathbb N^d$. We obtain $\tilde\Gamma_t\subset T$, and therefore $I_t:= \tilde\Gamma_t T$ is a monomial ideal in $T$ for all $t=1,\ldots,f$. It follows that $\height I_t\geq2$ (height), since $\gcd \tilde\Gamma_t=1$. By $T_+$ we denote the homogeneous maximal ideal of $T$. See \cite[Section~2]{SHSCM}. We have:

\begin{prop}[{\cite[Proposition~2.2]{SHSCM}}]\label{Szerlmi}

There are isomorphisms of $\mathbb Z$-graded $T$-modules:

\begin{enumerate}

\item $K[B] \cong \bigoplus_{t=1}^f I_t (-\deg h_t).$

\item $H^i_{K[{B}]_+}(K[B]) \cong \bigoplus_{t=1}^f H^i_{T_+} (I_t) (-\deg h_t)$ for all $i\geq0$.

\end{enumerate}

\end{prop}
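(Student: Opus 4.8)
The plan is to exhibit $K[B]$ as a finitely generated graded module over $K[A]$, to identify $K[A]$ with $T$, and then to split $K[B]$ according to the equivalence relation $\sim$, matching each piece with a twist of one of the monomial ideals $I_t$; part~(2) will then follow formally from part~(1).

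First I would record that sending $t^{e_i}\mapsto y_i$ defines an isomorphism of $\mathbb Z$-graded $K$-algebras $K[A]\xrightarrow{\sim}T$, under which $t^a$ with $a\in A=\alpha\mathbb N^d$ corresponds to the monomial $y^{a/\alpha}$. Since every $b\in B$ can be written as $b=b_0+a$ with $b_0\in B_A$ and $a\in A$ (descend by repeatedly subtracting elements of $A\setminus\maxk{0}$, which is possible until one reaches $B_A$ and terminates because $\deg$ strictly drops at each step, $B$ being positive and homogeneous), we have $B=B_A+A$, so $K[B]$ is a finitely generated $T$-module. The relation $\sim$ partitions $B$ into finitely many classes $C_1,\dots,C_f$; index them so that $\Gamma_t=C_t\cap B_A$. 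Because $A\subseteq G(A)$, each $C_t$ satisfies $C_t+A=C_t$, hence $V_t:=\bigoplus_{b\in C_t}Kt^b$ is a graded $T$-submodule of $K[B]$, and $K[B]=\bigoplus_{t=1}^fV_t$ as graded $T$-modules.

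The heart of the argument is the identification $V_t\cong I_t(-\deg h_t)$. Using the descent above inside a single class one gets $C_t=\Gamma_t+A$. For $x\in\Gamma_t$ one has $x-h_t\in\mathbb N^d$ by the very definition of $h_t$, and examining each coordinate separately — the coordinatewise minimum $h_{t[j]}$ is attained at some $m^{(j)}\in\Gamma_t$, and $x\sim m^{(j)}$ forces $x_{[j]}-m^{(j)}_{[j]}\in\alpha\mathbb Z$ — shows $x-h_t\in\alpha\mathbb N^d=A$; in particular $h_t\in G(B)\cap\mathbb N^d$ and $\deg h_t\in\mathbb N$. Consequently $C_t-h_t=\bigcup_{x\in\Gamma_t}\bigl((x-h_t)+A\bigr)\subseteq A$, and multiplication by $t^{-h_t}$ carries $V_t$ isomorphically onto the $K$-subspace of $T$ spanned by $\maxk{y^{(b-h_t)/\alpha}\mid b\in C_t}$, shifting all degrees down by $\deg h_t$. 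That spanning set consists precisely of the monomials of $T$ divisible by some $y^{(x-h_t)/\alpha}$ with $x\in\Gamma_t$, i.e.\ of a $K$-basis of the monomial ideal $I_t=\tilde\Gamma_tT$; hence $V_t\cong I_t(-\deg h_t)$ as graded $T$-modules. Summing over $t$ gives~(1).

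For~(2), since $K[B]$ is module-finite over $T$ the ideal $T_+K[B]$ is $K[B]_+$-primary, so $\sqrt{T_+K[B]}=K[B]_+$; by independence of the base ring and the fact that local cohomology depends only on the radical of the defining ideal, $H^i_{K[B]_+}(K[B])\cong H^i_{T_+}(K[B])$ for all $i\ge 0$. Applying the additive, exact, degree-preserving functors $H^i_{T_+}(-)$ — which commute with finite direct sums and with twists — to the decomposition in~(1) yields~(2). I expect the main obstacle to be the bookkeeping in the previous paragraph: verifying coordinatewise that $x-h_t\in A$, that $C_t=\Gamma_t+A$, and that after the twist by $t^{-h_t}$ the space $V_t$ is exactly the monomial ideal $I_t$; everything else is formal.
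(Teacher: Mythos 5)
Your proof is correct and is essentially the standard argument: the paper itself gives no proof of this statement but imports it from Hoa--St\"uckrad \cite[Proposition~2.2]{SHSCM}, whose proof proceeds exactly as you do, by viewing $K[B]$ as a finite module over $K[A]\cong T$, splitting it along the classes of $\sim$, and identifying each piece with $I_t(-\deg h_t)$ via multiplication by $t^{-h_t}$, with part~(2) following from the radical equality $\sqrt{T_+K[B]}=K[B]_+$ and the independence of local cohomology from the base ring. All the verifications you flag as potential obstacles ($x-h_t\in A$, $C_t=\Gamma_t+A$, and the matching of $V_t$ with the monomials of $I_t$) are carried out correctly.
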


It follows that  $\deg K[B]=f$. Moreover, we have
\begin{equation}\label{Sregberechnung}
\reg K[B] = \max\maxk{\reg I_t +\deg h_t\mid t=1,\ldots,f},
\end{equation}
where $\reg I_t$ denotes the regularity of $I_t$ as a $\mathbb Z$-graded $T$-module. This shows that the regularity of $K[B]$ is independent of $K$ for $\dim K[B]\leq5$ by \cite[Corollary~1.4]{SUBZ}.

\begin{Bem}

This decomposition can be computed by using the \textsc{Macaulay2} \cite{SM2} package \textsc{MonomialAlgebras} \cite{SBEN}, which has been developed by Janko B\"ohm, David Eisenbud, and the author. In this package we consider the case of affine semigroups $Q'\subseteq Q\subseteq\mathbb N^d$ such that $K[Q]$ is finite over $K[Q']$; the implemented algorithm decomposes the ring $K[Q]$ into a direct sum of monomial ideals in $K[Q']$. There is also an algorithm implemented computing $\reg K[Q]$ in the homogeneous case, moreover, there are functions available testing the Buchsbaum, Cohen-Macaulay, Gorenstein, normal, and the seminormal property in the simplicial case. Note that this decomposition works more general, for more information we refer to \cite{SBENAX}.

\end{Bem}

\begin{defi}

For an element $x\in B$ we say that a sequence $\lambda=(b_1,\ldots,b_n)$ has \emph{$*$-property} if $b_1,\ldots,b_n \in \{e_1,\ldots,e_d,a_1,\ldots,a_c\}$ and $x-b_1\in B, x-b_1-b_2\in B,\ldots,x-(\sum_{j=1}^n b_j)\in B$; we say that the \emph{length} of $\La$ is $n$. Let $\lambda=(b_1,\ldots,b_n)$ be a sequence with $*$-property of $x$; we define $x(\lambda,i):=x-(\sum_{j=1}^i b_j)$ for $i=1,\ldots,n$, and $x(\lambda,0):=x$. By $\Lambda_x$ we denote the set of all sequences with $*$-property of $x$ with length $\deg x$, with the convention $\LA_0:=\emptyset$.

\end{defi}

By construction we have $\LA_x\not=\emptyset$ for all $x\in B\setminus\{0\}$. The definition of a sequence with \mbox{$*$-property} is motivated to control the degree of $K[B]$, the second assertion in Lemma~\ref{S*prop} illustrates the usefulness of this construction. For elements $x,y\in G(B)$ we define $x\geq y$ if $x_{[k]}\geq y_{[k]}$ for all $k=1,\ldots,d$.

\begin{bem}\label{Sxdegx}

Let $\lambda=(b_1,\ldots,b_n)$ be a sequence with $*$-property of $x$. We get $x(\lambda,i)\geq x(\lambda,j)$ for $0\leq i\leq j\leq n$. Moreover, we have $\deg x(\lambda,i)=\deg x-i$ for $i=0,\ldots,n$. Hence for $\La\in\LA_x$ we get $x(\La,\deg x)=0$.

\end{bem}

\begin{lem}\label{S*prop}

Let $x\in B_A\setminus\{0\}$ and $\lambda=(b_1,\ldots,b_n)$ be a sequence with $*$-property of $x$. Then

\begin{enumerate}

\item $x(\lambda,i)\in B_A$ for all $i=0,\ldots,n$.

\item $x(\lambda,i)\not\sim x(\lambda,j)$ for all $i,j\in\mathbb N$ with $0\leq i<j\leq n$.

\end{enumerate}

\end{lem}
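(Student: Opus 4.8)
First I would tackle part (1). The goal is to show that the "$*$-property chain" stays inside $B_A$, i.e. removing Hilbert-basis generators one at a time from an element of $B_A$ never leaves $B_A$. The key observation is that $B_A$ is downward-closed in a suitable sense: the excerpt already records that if $x \notin B_A$ then $x+y \notin B_A$ for all $y \in B$. I would use the contrapositive. Suppose $x(\lambda, i) \notin B_A$ for some $i$; then $x(\lambda,i) + \bigl(\sum_{j=1}^i b_j\bigr) = x \notin B_A$, contradicting $x \in B_A$. (Here I must check $\sum_{j=1}^i b_j \in B$, which is immediate since each $b_j \in \hilb(B) \subseteq B$.) So every $x(\lambda,i)$ lies in $B_A$. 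This step is essentially a one-line deduction from the stated monotonicity of the complement of $B_A$; I don't expect any obstacle here.

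Part (2) is the substantive claim and where the real work lies. I need to rule out $x(\lambda,i) \sim x(\lambda,j)$ for $i < j$, i.e. $x(\lambda,i) - x(\lambda,j) \in \alpha\mathbb{Z}^d = G(A)$. Note $x(\lambda,i) - x(\lambda,j) = \sum_{k=i+1}^{j} b_k$, a sum of elements of $\{e_1,\dots,e_d,a_1,\dots,a_c\}$, which is an element of $B$; call it $s$. If $s \sim 0$, then $s \in G(A) \cap B$. Now I would argue that $s \in G(A)\cap B$ together with $s = x(\lambda,i) - x(\lambda,j)$ forces a contradiction with $x(\lambda,i) \in B_A$: by Remark~\ref{Sxdegx} we have $x(\lambda,i) \geq x(\lambda,j) \geq 0$ componentwise and $\deg s = j - i \geq 1$, so $s \neq 0$; moreover $s \geq 0$ with $s \in \alpha\mathbb{Z}^d$ means every component of $s$ is a nonnegative multiple of $\alpha$, so in fact $s \in A$ (it is a nonnegative integer combination of $e_1,\dots,e_d$) and $s \in A \setminus \{0\}$. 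But then $x(\lambda,i) - s = x(\lambda,j) \in B$ with $s \in A\setminus\{0\}$, which directly contradicts the defining property of $B_A$, namely $x(\lambda,i) \in B_A$ means $x(\lambda,i) - a \notin B$ for all $a \in A \setminus \{0\}$.

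The crux — and the one place I would be careful — is the deduction "$s \geq 0$ componentwise and $s \in \alpha\mathbb{Z}^d$ implies $s \in A$." This uses that $A = \langle e_1,\dots,e_d\rangle$ is exactly $\alpha\mathbb{N}^d$, which holds by the explicit normalization $e_i = (0,\dots,0,\alpha,0,\dots,0)$ set up in Section~\ref{Sbasics}; so $G(A) \cap \mathbb{N}^d = \alpha\mathbb{N}^d = A$. I should also double-check the edge cases: when $i = 0$ one needs $x = x(\lambda,0) \in B_A$, which is the hypothesis, and when $j = n$ and $\lambda \in \Lambda_x$ one has $x(\lambda,n) = 0$, but the argument does not require $\lambda$ to have full length, only the generic $*$-property chain, so nothing special happens. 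Finally, assembling (1) and (2): since all the $x(\lambda,i)$ lie in $B_A$ and are pairwise non-equivalent, they lie in $j+1 \leq n+1$ distinct equivalence classes $\Gamma_t$ — this is the fact that will be exploited in Section~\ref{Sseminormalcase} to bound $\reg K[B] \leq f = \deg K[B]$, but for the lemma itself I only need the two itemized statements, which the above establishes.
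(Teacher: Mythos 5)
Your proof is correct and takes essentially the same route as the paper's: part (1) via the monotonicity of the complement of $B_A$, and part (2) by noting that $x(\lambda,i)-x(\lambda,j)$ is a nonzero, componentwise nonnegative element of $\alpha\mathbb{Z}^d$, hence lies in $A\setminus\{0\}$, contradicting $x(\lambda,i)\in B_A$. The only cosmetic difference is that the paper writes this difference as $\sum_t n_te_t$ and subtracts a single $e_t$ rather than the whole sum $s$.
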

\begin{proof}

$(1)$ Follows from construction since if $y\notin B_A$ then $y+z\notin B_A$ for all $y,z\in B$.\\
$(2)$ Suppose to the contrary that $x(\lambda,i)\sim x(\lambda,j)$ for some $i,j\in\mathbb N$ with $0\leq i<j\leq n$. We have $x(\lambda,i)\geq x(\lambda,j)$, hence 
$$
x(\lambda,i)=x(\La,j)+\sum\nolimits_{t=1}^d n_te_t
$$
for some $n_t\in\mathbb N$. Since $\deg x(\lambda,i)>\deg x(\lambda,j)$ we get that $n_t>0$ for some $t\in\{1,\ldots,d\}$. Thus, $x(\La,i)-e_t\in B$ and therefore $x(\lambda,i)\notin B_A$ which contradicts claim $(1)$.
\end{proof}

\begin{bem}\label{Sexistenz}

Let $x\in B_A\setminus\{0\}$ and $\La=(b_1,\ldots,b_n)$ be a sequence with $*$-property of $x$. Suppose that $b_j\in\{e_1,\ldots,e_d\}$ for some $j\in\{1,\ldots,n\}$. Hence $x-b_j=x(\La,n)+\sum_{k=1, k\not=j}^nb_k\in B$ which contradicts $x\in B_A$. This shows that $b_1,\ldots,b_n\in\{a_1,\ldots,a_c\}$.

\end{bem}

Lemma~\ref{S*prop} implies that $\deg x\leq \deg K[B]-1$ for all $x\in B_A$. This bound can be improved by using the following observation:

\begin{bem}\label{Sa1bisc}

Consider the set \mbox{$L=\{0,a_1,\ldots,a_c\}$}, by construction $L\subseteq B_A$. Let $x\in L$ and $y\in B_A$ with $x\not= y$; suppose that $x\sim y$. Since $0\leq x_{[i]}<\alpha$ for all $i=1,\ldots,d$, we have $y\geq x$. By a similar argument as in Lemma~\ref{S*prop} $(2)$ we get $y\not\in B_A$. This shows that $x\not\sim y$. 

\end{bem}

\begin{prop}[{\cite[Theorem~1.1]{SHSCM}}]\label{Segred}

We have $\deg x\leq \deg K[B] - \codim K[B]$ for all $x\in B_A$.

\end{prop}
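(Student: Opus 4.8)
The plan is to produce, for each $x \in B_A$, enough pairwise non-equivalent elements of $B_A$ to force the number $f = \#(G(B)\cap D) = \deg K[B]$ of equivalence classes to be at least $\deg x + \codim K[B]$. The case $x = 0$ is immediate: by Remark~\ref{Sa1bisc} the $c+1$ elements $0, a_1, \ldots, a_c$ of $L$ lie in pairwise distinct classes, so $f \ge c+1$ and $\deg x = 0 \le f - c$. So I would assume $x \ne 0$ and set $n := \deg x \ge 1$. Since $\Lambda_x \ne \emptyset$, pick $\lambda = (b_1, \ldots, b_n) \in \Lambda_x$; Lemma~\ref{S*prop} then gives that $x(\lambda, 0), \ldots, x(\lambda, n)$ all lie in $B_A$ and are pairwise non-equivalent, so they account for $n+1$ distinct classes. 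This already recovers the bound $\deg x \le f - 1$; the remaining task is to exhibit $c - 1$ further classes not occurring among these.

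Those extra classes will come from the generators $a_1, \ldots, a_c$. By Remark~\ref{Sa1bisc} the $a_j$ lie in pairwise distinct classes and, crucially, each $a_j$ is the only element of $B_A$ in its class. The key computation is to locate the one unavoidable overlap: by Remark~\ref{Sxdegx} we have $x(\lambda, n) = 0$, hence $x(\lambda, n-1) = b_n$, and by Remark~\ref{Sexistenz} this $b_n$ is among the $a_1, \ldots, a_c$, say $b_n = a_{i_0}$. Since $\deg a_j = 1$ for every $j$ and $\deg x(\lambda, i) = n - i$ by Remark~\ref{Sxdegx}, the only index for which $x(\lambda, i)$ could equal some $a_j$ is $i = n-1$, where it equals $a_{i_0}$. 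Using the ``singleton class'' property of the $a_j$ once more, it follows that no $x(\lambda, i)$ is equivalent to any $a_j$ with $j \ne i_0$, so the $c-1$ classes of the $a_j$ with $j \ne i_0$ are disjoint from the $n+1$ classes of the $x(\lambda, i)$.

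Combining the two families gives $(n+1) + (c-1) = n + c$ pairwise distinct equivalence classes on $B_A$, whence $f \ge n + c$, i.e. $\deg x = n \le \deg K[B] - \codim K[B]$. I do not anticipate a genuine obstacle: all the ingredients — the $*$-property chain of Lemma~\ref{S*prop}, the degree bookkeeping of Remark~\ref{Sxdegx}, the fact that the $b_j$ are generators $a_1,\ldots,a_c$ from Remark~\ref{Sexistenz}, and the rigidity of the classes of $0, a_1, \ldots, a_c$ from Remark~\ref{Sa1bisc} — are already in place. The only point requiring care is the final count: one must verify that the class of $b_n = a_{i_0}$ is the \emph{unique} coincidence between the two families, so that precisely $c-1$ — and not fewer — new classes are contributed.
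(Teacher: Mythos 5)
Your proposal is correct and follows essentially the same route as the paper: both arguments assemble the chain $x(\lambda,0),\ldots$ from a sequence $\lambda\in\Lambda_x$ together with $0,a_1,\ldots,a_c$ into a single set of pairwise non-equivalent elements of $B_A$ (indeed, since $x(\lambda,n)=0$ and $x(\lambda,n-1)=b_n=a_{i_0}$, your set coincides with the paper's set $L$), and then count $\deg x+\codim K[B]$ classes. The only difference is bookkeeping: you locate the single overlap $a_{i_0}$ explicitly, while the paper truncates the chain at $x(\lambda,\deg x-1)$ and absorbs the overlap into the union.
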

\begin{proof}

Let $x\in B_A\setminus\{0\}$, and $\lambda\in\LA_x$. By Lemma~\ref{S*prop} and Remark~\ref{Sa1bisc} we get a set 
$$
L:=\{0,a_1,\ldots,a_c\}\cup\{x(\lambda,0),\ldots,x(\La,\deg x-1)\},
$$ 
with $L\subseteq B_A$ such that $y\not\sim z$ for all $y,z\in L$ with $y\not=z$. Hence 
$$
\deg K[B]=f\geq \#L=\deg x + \codim K[B].
$$
\end{proof}

We note that this proof is a new proof of \cite[Theorem~1.1]{SHSCM}. We define the \emph{reduction number} $\red(K[B])$ of $K[B]$ by $\red(K[B]):=\max\maxk{\deg x\mid x\in B_A}$, see \cite[Page~129,135]{SHSCM}. It follows that
\begin{equation}
\red(K[B])\leq \deg K[B] - \codim K[B],
\end{equation}
that is, the Eisenbud-Goto conjecture holds for the reduction number of $K[B]$. So whenever we have $\reg K[B]=\red(K[B])$ the Eisenbud-Goto conjecture holds. It should be mentioned that this property does not hold in general. Even for a monomial curve in $\mathbb P^3$ the equality does not hold. For $B=\erz{(40,0), (0,40), (35,5), (11,29)}$ we get $\reg K[B]=13>11=\red (K[B])$. Note that we always have $\red (K[B])\leq \reg K[B]$ by Equation~(\ref{Sregberechnung}).

\begin{bsp}\label{Sbsprechnung}

Consider the monoid $B=\erz{(4,0), (0,4), (3,1), (1,3)}$. We have 
$$
B_A=\{(0,0), (3,1), (1,3), (6,2),(2,6)\},
$$ 
and therefore $\red (K[B])=\max\maxk{0,1,1,2,2}=2$. We get 
$$
\Gamma_1=\{(0,0)\}, \Gamma_2=\{(3,1)\}, \Gamma_3=\{(1,3)\}, \Gamma_4=\{(6,2), (2,6)\},
$$
and $h_1=(0,0),h_2=(3,1), h_3=(1,3),h_4=(2,2)$. By this we have $I_1=I_2=I_3=T$ and $I_4= (y_1,y_2)T$, hence 
$$
\reg K[B] = \max\maxk{\reg T + 0, \reg T +1, \reg T +1, \reg (y_1,y_2)T+1} = \max\maxk{0,1,1,2} = 2.
$$

\end{bsp}

\begin{lem}\label{Shilfe}

Let $x\in B_A, t\in\mathbb N^+, q\in\{1,\ldots,d\}$, and $x_{[q]}=t\alpha$. There exists a $\La\in\LA_x$ such that $(t-1)\alpha<x(\La,1)_{[q]}<t\alpha$. 

\end{lem}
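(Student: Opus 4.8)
The goal is to produce, for $x\in B_A$ with $x_{[q]}=t\alpha$, a sequence $\La\in\LA_x$ whose first step $b_1$ decreases the $q$-th coordinate by an amount strictly between $0$ and $\alpha$; equivalently, by Remark~\ref{Sexistenz} we need $b_1=a_i$ for some $i$ with $0<(a_i)_{[q]}<\alpha$. So the first thing I would do is take \emph{any} $\mu=(b_1,\ldots,b_n)\in\LA_x$ (which exists since $x\in B_A\setminus\{0\}$, as $x_{[q]}=t\alpha>0$ forces $x\ne 0$), and note that by Remark~\ref{Sexistenz} every $b_j$ lies in $\{a_1,\ldots,a_c\}$, so in particular $0\le (b_j)_{[q]}<\alpha$ for each $j$ (because $a_j\in M_{d,\alpha}$ means each coordinate of $a_j$ is at most $\alpha$, and it cannot equal $\alpha$ since then $a_j=e_q\notin\{a_1,\dots,a_c\}$ — more carefully, $(a_j)_{[q]}=\alpha$ would force all other coordinates zero by the coordinate-sum condition).

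**Key steps.** First I would compute $\sum_{j=1}^n (b_j)_{[q]}$. Since $\mu\in\LA_x$ we have $n=\deg x$ and $x(\mu,n)=0$ by Remark~\ref{Sxdegx}, so telescoping gives $\sum_{j=1}^{n}(b_j)_{[q]}=x_{[q]}=t\alpha$. Thus the $n$ nonnegative integers $(b_1)_{[q]},\ldots,(b_n)_{[q]}$, each strictly less than $\alpha$, sum to $t\alpha$. Since $t\ge 1$, not all of them can be zero; hence at least one index $j$ has $0<(b_j)_{[q]}<\alpha$. Now I would \emph{reorder} the sequence $\mu$ so that such a $b_j$ comes first: set $\La:=(b_j,\,b_1,\ldots,\widehat{b_j},\ldots,b_n)$. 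The content that must be checked is that this reordered sequence still has $*$-property of $x$, i.e.\ that all the partial sums $x-b_j,\ x-b_j-b_1,\ldots$ lie in $B$. For the very first one, $x-b_j\in B$: this is where I expect the main obstacle, since a priori we only know $x-b_1\in B$, not $x-b_j\in B$. The trick is that $x\in B_A$ together with Lemma~\ref{S*prop}(1) gives $x(\mu,i)\in B_A$ for all $i$; in particular, writing $x=x(\mu,j-1)+b_1+\cdots+b_{j-1}$ hmm — that is the wrong decomposition. Instead I would argue: $x(\mu,j-1)\in B_A$ and $x(\mu,j-1)-b_j=x(\mu,j)\in B$; but we want to subtract $b_j$ from $x$ itself, not from $x(\mu,j-1)$. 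The clean way is to use additivity: $x-b_j=\bigl(x(\mu,j-1)-b_j\bigr)+\sum_{k=1}^{j-1}b_k=x(\mu,j)+\sum_{k=1}^{j-1}b_k\in B$ since $B$ is a semigroup and both summands lie in $B$. Then for the subsequent partial sums, $x-b_j-b_1-\cdots-b_i=x(\mu,i)-b_j+(\text{terms among }b_{i+1},\dots,b_{j-1})$ — again one writes $x-b_j-\sum_{k=1}^{i}b_k = x(\mu,j)+\sum_{k=i+1}^{j-1}b_k$ for $i<j$, which is in $B$, and for $i\ge j$ the partial sum equals $x(\mu,i)\in B$. So every partial sum of the reordered sequence is in $B$.

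**Conclusion.** Having verified that $\La=(b_j,b_1,\ldots,\widehat{b_j},\ldots,b_n)$ has $*$-property of $x$ and has length $n=\deg x$, we conclude $\La\in\LA_x$. By construction $x(\La,1)=x-b_j$, so $x(\La,1)_{[q]}=t\alpha-(b_j)_{[q]}$, and since $0<(b_j)_{[q]}<\alpha$ we obtain $(t-1)\alpha<x(\La,1)_{[q]}<t\alpha$, as required. The only genuinely delicate point is the reordering argument for membership in $B$; everything else is bookkeeping with degrees and coordinate sums, using that each $a_i$ has coordinate sum exactly $\alpha$ and hence each individual coordinate in $[0,\alpha)$.
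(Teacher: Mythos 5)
Your proof is correct and takes essentially the same route as the paper: pick any $\nu\in\LA_x$, use $x(\nu,\deg x)=0$ to find a $b_k$ with $0<b_{k[q]}<\alpha$, and move it to the front, the point being that any permutation of a sequence in $\LA_x$ is again in $\LA_x$ since $x=\sum_j b_j$. Your detailed verification of the reordered partial sums is just an expanded version of the one-line observation the paper makes.
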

\begin{proof}

Fix a $\nu=(b_1,\ldots,b_{\deg x})\in\Lambda_x$. We have $x(\nu,\deg x)=0$ by Remark~\ref{Sxdegx}, hence there is a $k\in\{1,\ldots,\deg x\}$ with $b_{k[q]}>0$. Since $b_k\in\{a_1,\ldots,a_c\}$ by Remark~\ref{Sexistenz} we get that $b_{k[q]}<\alpha$. The claim follows from the fact that $(b_{\sigma(1)},\ldots,b_{\sigma(\deg x)})\in\LA_x$ for every permutation $\sigma$ of $\{1,\ldots,\deg x\}$, since $x=\sum_{j=1}^{\deg x}b_j$.
\end{proof}

The next combinatorial Lemma will be useful to prove the Eisenbud-Goto conjecture in the seminormal case in Theorem~\ref{Segsn}.

\begin{lem}\label{Shilfe2}

Let $J\subseteq\{1,\ldots,d\}$ with $\#J\geq1$, and let $x\in B_A$ such that $x_{[q]}=\alpha$ for all $q\in J$. There exists a $\La\in\LA_x$ with the property: for all $p=1,\ldots,\#J$ there is a $q\in J$ such that $0<x(\La,p)_{[q]}<\alpha$.

\end{lem}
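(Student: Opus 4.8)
The plan is to prove the statement by induction on $\#J$. The base case $\#J=1$ is essentially Lemma~\ref{Shilfe} applied with $t=1$: if $J=\{q\}$ and $x_{[q]}=\alpha$, that lemma produces $\La\in\LA_x$ with $0<x(\La,1)_{[q]}<\alpha$, which is exactly the required property for $p=1$. For the inductive step, suppose $\#J\geq 2$ and the claim holds for all smaller index sets. The idea is to first peel off one step that ``uses up'' coordinate $q_0$ for some $q_0\in J$, then apply the inductive hypothesis to the resulting element with the smaller set $J\setminus\{q_0\}$, and finally verify that the coordinate $q_0$ stays strictly between $0$ and $\alpha$ for all subsequent partial sums, so it can serve as a witness at the last step $p=\#J$.

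More precisely, I would first use Lemma~\ref{Shilfe} (with $t=1$) to find some $b_1\in\{a_1,\ldots,a_c\}$ and some $q_0\in J$ with $x-b_1\in B$ and $0<(x-b_1)_{[q_0]}<\alpha$; here one has to be a little careful that the coordinate whose value drops below $\alpha$ can be chosen inside $J$ — in fact the argument of Lemma~\ref{Shilfe} shows that \emph{any} coordinate $q$ with $x_{[q]}=\alpha$ and $x_{[q]}>0$ can be forced down at the first step by permuting the sequence, so we may pick $q_0\in J$. Set $x':=x-b_1$; then $x'\in B_A$ by Lemma~\ref{S*prop}(1), and $\deg x'=\deg x-1$. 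Since $b_1\in\{a_1,\ldots,a_c\}$ we have $b_{1[q]}<\alpha$ for every $q$, so for each $q\in J\setminus\{q_0\}$ the coordinate satisfies $x'_{[q]}=\alpha-b_{1[q]}$, which need \emph{not} equal $\alpha$. This is the point where the naive induction does not immediately close, and it is the step I expect to be the main obstacle: after removing $b_1$, the coordinates indexed by $J\setminus\{q_0\}$ may have already dropped below $\alpha$, so one cannot directly feed $(x',J\setminus\{q_0\})$ into the inductive hypothesis as stated.

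To get around this I would strengthen the inductive statement (or argue directly) so that what matters is not that $x_{[q]}$ equals $\alpha$ exactly, but that it lies in the range $0<x_{[q]}\le\alpha$: by Remark~\ref{Sxdegx} the partial sums $x(\La,p)_{[q]}$ are nonincreasing in $p$, and each single step $b_{k[q]}<\alpha$ since $b_k\in\{a_1,\ldots,a_c\}$ by Remark~\ref{Sexistenz}, so once a coordinate is in $(0,\alpha]$ it can only leave this interval by hitting $0$. Thus I would prove the auxiliary claim: if $x\in B_A$ and $0<x_{[q]}\le\alpha$ for all $q\in J$, then there is $\La\in\LA_x$ such that for each $p=1,\ldots,\#J$ some $q\in J$ has $0<x(\La,p)_{[q]}<\alpha$; moreover one can track, coordinate by coordinate, the first index $p_q$ at which $x(\La,p)_{[q]}$ drops strictly below $\alpha$, and arrange (by permuting, as in Lemma~\ref{Shilfe}) that these drops happen at distinct steps $p=1,2,\ldots,\#J$. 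Concretely: apply Lemma~\ref{Shilfe} to force coordinate $q_0\in J$ down at step $1$; then $x'=x(\La,1)$ still has $0<x'_{[q]}\le\alpha$ for $q\in J$ and additionally $0<x'_{[q_0]}<\alpha$; the inductive hypothesis on $(x',J\setminus\{q_0\})$ yields a continuation of the sequence so that steps $p=1,\ldots,\#J-1$ relative to $x'$ (i.e. steps $p=2,\ldots,\#J$ relative to $x$) each have a witnessing coordinate in $J\setminus\{q_0\}$; and step $p=1$ relative to $x$ is witnessed by $q_0$. Since $x(\La,p)_{[q_0]}$ remains strictly between $0$ and $\alpha$ for \emph{all} $p\ge 1$ (it started in $(0,\alpha)$ and decreases by amounts $<\alpha$, reaching $0$ only possibly at the very end — and even if it reaches $0$ before the end, one of the other coordinates in $J\setminus\{q_0\}$ is the designated witness there), the property holds for every $p=1,\ldots,\#J$. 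The only routine bookkeeping left is to check that the concatenated sequence is still in $\LA_x$, which is immediate since $x=b_1+\sum(\text{continuation})$ and each partial sum stays in $B$, and that permutations preserve membership in $\LA_x$, which is exactly the observation used in Lemma~\ref{Shilfe}.
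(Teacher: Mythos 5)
You correctly locate the obstacle (after subtracting $b_1$ the coordinates in $J\setminus\{q_0\}$ need no longer equal $\alpha$), but the repair does not work: the strengthened auxiliary claim with hypothesis ``$0<x_{[q]}\leq\alpha$ for all $q\in J$'' is false. Take $d=4$, $\alpha=2$, $B=\erz{e_1,\ldots,e_4,(1,1,0,0),(0,0,1,1)}$ and $x=(1,1,1,1)\in B_A$ with $J=\{1,2\}$; then $\LA_x=\{((1,1,0,0),(0,0,1,1)),\,((0,0,1,1),(1,1,0,0))\}$ and neither sequence has a witness in $J$ at both $p=1$ and $p=2$, since coordinates $1$ and $2$ jump from $1$ to $0$ the moment $(1,1,0,0)$ is subtracted. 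This also refutes your parenthetical ``reaching $0$ only possibly at the very end'': a coordinate in $(0,\alpha)$ can hit $0$ in a single step, and nothing in the pointwise bounds prevents \emph{all} coordinates of $J\setminus\{q_0\}$ from doing so before their designated steps; the fallback ``one of the other coordinates is the designated witness there'' just appeals to the inductive hypothesis, which is the false statement. The assertion ``$x'=x(\La,1)$ still has $0<x'_{[q]}\leq\alpha$ for $q\in J$'' is true at the first level (because there $x_{[q]}=\alpha$ and $b_{1[q]}<\alpha$), but it is exactly the property that fails to propagate once the hypothesis has been weakened, so the induction on $\#J$ does not close.

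The information your strengthening discards is quantitative, not pointwise: since each $b_i$ has total coordinate sum $\alpha$, one has $\sum_{q\in J}x(\La,p)_{[q]}\geq \#J\alpha-p\alpha$ for the \emph{original} $J$, so some $q\in J$ satisfies $x(\La,p)_{[q]}>0$ whenever $p<\#J$; and for $p=\#J$ one needs $x\in B_A$, because if all $J$-coordinates of $x(\La,\#J)$ vanished then $x(\La,\#J)=x-\sum_{q\in J}e_q\in B$, contradicting $x\in B_A$. Shrinking $J$ in the recursion destroys this count. The paper instead keeps $J$ fixed and inducts on the step index $k$: as long as some $q\in J$ still has $x(\La,k)_{[q]}=\alpha$, Lemma~\ref{Shilfe} is applied to $x(\La,k)\in B_A$ to splice in a block pushing that coordinate into $(0,\alpha)$ at step $k+1$; once all $J$-coordinates are below $\alpha$ they stay below $\alpha$, and the degree count plus the $B_A$-contradiction supplies the missing ``$>0$'' half of the witness condition for the remaining steps. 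If you want to salvage your induction on $\#J$, you would have to carry a hypothesis of the form $\sum_{q\in J}x_{[q]}\geq\#J\cdot\alpha$ (or equivalently run this counting argument at the end), not merely $0<x_{[q]}\leq\alpha$.
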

\begin{proof}

Using induction on $k\in\mathbb N^+$ with $k\leq\#J$ as well as Lemma~\ref{Shilfe} we get a sequence $\La=(b_1,\ldots,b_{\deg x})\in\LA_x$ with the property: for all $p=1,\ldots,k$ there is a $q\in J$ such that $0<x(\La,p)_{[q]}<\alpha$. In case that $x(\La,k)_{[q]}=\alpha$ for some $q\in J$ we can use Lemma~\ref{Shilfe} to get a sequence $(g_1,\ldots,g_{\deg x(\La,k)})\in\LA_{x(\La,k)}$ with $0<(x(\La,k)-g_1)_{[q]}<\alpha$, since $x(\La,k)\in B_A$ by Lemma~\ref{S*prop}. By construction it follows that
$$
\La'=(b_1,\ldots,b_k,g_1,\ldots g_{\deg x(\La,k)})\in\LA_{x},
$$
with the property: for all $p=1,\ldots,k+1$ there is a $q\in J$ such that $0<x(\La',p)_{[q]}<\alpha$. Assume that $x(\La,k)_{[q]}<\alpha$ for all $q\in J$. In this case $\La$ has already the claimed property. Fix a $p\in\{1,\ldots,\#J\}$; we need to show that there is a $q\in J$ with $x(\La,p)_{[q]}>0$ and we are done. Suppose to the contrary that $x(\La,p)_{[q]}=0$ for all $q\in J$. Since $\deg x(\La,p)\leq\deg x-\#J$ we get $p=\#J$ by Remark~\ref{Sxdegx}. Again by Remark~\ref{Sxdegx} it follows that  $x(\La,\#J)=x-(\sum_{q\in J}e_q)$ which contradicts  $x\in B_A$, since $x(\La,\#J)\in B$.
\end{proof}

\section{The seminormal case}\label{Sseminormalcase}

There are two closely related definitions:

\begin{defi}\label{Sdefsemi} 

Let $U$ be an affine semigroup. 

\begin{enumerate}

\item We call $U$ \emph{normal} if $x\in G(U)$ and $tx\in U$ for some $t\in\mathbb N^+$ implies that $x\in U$. 

\item We call $U$ \emph{seminormal} if $x \in G(U)$ and $2x, 3x \in U$ implies that $x \in U$.

\end{enumerate}

\end{defi}

A domain $S$ is called \emph{seminormal} if for every element $x$ in the quotient field $Q(S)$ of $S$ such that $x^2,x^3\in S$ it follows that $x\in S$. Note that the ring $K[U]$ is seminormal if and only if $U$ is seminormal. This was first observed by Hochster and Roberts in \cite[Proposition~5.32]{SHRLC}, provided that $U\subseteq \mathbb N^d$. For a proof in the general affine semigroup case we refer to \cite[Theorem~4.76]{SBGPRKT}. A similar result holds in the normal case, see \cite[Proposition~1]{SMHCM} and \cite[Theorem~4.40]{SBGPRKT}. To get new bounds for the regularity of $K[B]$, we need another characterization. We define the set $\boxx(B) := \{x\in B\mid x_{[i]}\leq \alpha~\forall i=1,\ldots,d\}$.

\begin{satz}[{\cite[Theorem~4.1.1]{SPHDPL}}]\label{Scharsn}

The simplicial affine semigroup $B$ is seminormal if and only if $B_A$ is contained in $\boxx(B)$.

\end{satz}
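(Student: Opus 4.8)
The plan is to prove the two implications of Theorem~\ref{Scharsn} separately, using the characterization of seminormality via $G(U)$ together with the description of $B_A$ in terms of the boxes $D$ and $\boxx(B)$.

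For the \emph{only if} direction, I would assume $B$ is seminormal and take an arbitrary $x\in B_A$; the goal is $x\in\boxx(B)$, i.e. $x_{[i]}\le\alpha$ for every $i$. Suppose some coordinate $x_{[q]}>\alpha$. The idea is to produce an element $z\in G(B)$ with $z_{[q]}$ strictly between the relevant multiples of $\alpha$ such that $2z,3z\in B$ but $z\notin B$, contradicting seminormality; alternatively, and more directly, one can work with $x-e_q$. Indeed, if $x_{[q]}>\alpha$ then $x-e_q\in\mathbb N^d$ and $x-e_q\in G(B)$ (since $e_q\in B$). If one could show $2(x-e_q)$ and $3(x-e_q)$ lie in $B$, seminormality would force $x-e_q\in B$, contradicting $x\in B_A$. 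The point to exploit is that $x-e_q$ is ``large'' in the $q$-th coordinate: using Lemma~\ref{Shilfe} (with $t$ chosen so that $x_{[q]}$ is close to $t\alpha$, or after adding suitable $e_j$'s to normalize), one finds sequences with $*$-property that peel off generators $a_i$ and never force an $e_q$ subtraction. The subtlety is that $x_{[q]}$ need not be an exact multiple of $\alpha$; here I would first replace $x$ by $x+ke_q$ for appropriate $k$ so that the $q$-th coordinate becomes an exact multiple of $\alpha$ larger than $\alpha$, run the argument there, and then descend. I expect this descent/normalization bookkeeping to be the main obstacle.

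For the \emph{if} direction, assume $B_A\subseteq\boxx(B)$ and let $x\in G(B)$ with $2x,3x\in B$; I must show $x\in B$. Since $x\in G(B)=G(A)+\mathbb Z\{a_1,\dots,a_c\}$ and $x=\gcd(2,3)\cdot x = 3x-2x$ can be written using $2x,3x\in B$, one gets $x = 3x - 2x$ as a difference of semigroup elements, hence $x\in G(B)$; but more usefully, writing $tx\in B$ for $t=2,3$ and decomposing each via Proposition~\ref{Szerlmi} (or just via $B=A+(B_A\text{-part})$), one sees $x$ lies in the equivalence class of some $h_t$ or, concretely, $x\equiv b \pmod{G(A)=\alpha\mathbb Z^d}$ for a unique $b\in B_A$. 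Then $2x\in B$ with $2x\sim 2b$ and $3x\in B$ with $3x\sim 3b$, and one needs $2b,3b\in B$ to pull back to $b\in B$ (via normality of $A$ in the relevant coordinates), after which $x-b\in G(A)=\alpha\mathbb Z^d$ together with $2x,3x\in B$ and the fact that $b_{[i]}\le\alpha$ forces $x\ge 0$ coordinatewise, hence $x = b + \sum n_j e_j\in B$. The hypothesis $b\in\boxx(B)$ is exactly what makes the inequalities $x_{[i]}\ge 0$ come out, since $2x_{[i]}=(2x)_{[i]}\ge 0$ and $3x_{[i]}\ge0$ with $x_{[i]}\in\tfrac1\alpha\mathbb Z$ already give $x_{[i]}\ge0$ once we know $x_{[i]}$ is not a negative value that $2x,3x$ could still clear — and boundedness of $b$ controls the fractional part. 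I would organize this as: (i) reduce $x$ modulo $\alpha\mathbb Z^d$ to its representative $b\in B_A$; (ii) show $b\in B$ using $2b,3b\in B$ and seminormality of the coordinate semigroup / the box condition; (iii) conclude $x\in b+A\subseteq B$. The delicate point is step (ii), i.e. transferring the seminormality test from $x$ down to the canonical representative $b$ and using $b\in\boxx(B)$ to kill the coordinates where $x$ might a priori be negative.

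Throughout, the workhorses are Lemma~\ref{S*prop}, Lemma~\ref{Shilfe}, and Remark~\ref{Sa1bisc}, which together let me move inside $B_A$ along sequences with $*$-property while controlling coordinates modulo $\alpha$; and the elementary observation that $G(A)=\alpha\mathbb Z^d$ so that ``$\sim$'' is reduction mod $\alpha$ coordinatewise. I anticipate the cleanest writeup contraposes the \emph{only if} part: if $B_A\not\subseteq\boxx(B)$, pick $x\in B_A$ with $x_{[q]}>\alpha$, and directly exhibit a fractional point $z:=x/2$ or an integer point $x-e_q$ witnessing non-seminormality, the verification $2z,3z\in B$ being the one genuine computation. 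That computation — showing the relevant small multiples land back in $B$ — is where I expect to spend the real effort.
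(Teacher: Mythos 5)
The paper does not prove Theorem~\ref{Scharsn} at all --- it is imported from Li's thesis --- so the only thing to assess is whether your argument stands on its own, and in both directions it has a genuine gap. For the \emph{only if} direction, the central claim that $2(x-e_q)$ and $3(x-e_q)$ lie in $B$ whenever $x\in B_A$ has $x_{[q]}>\alpha$ is false: take $d=3$, $\alpha=3$, $B=\erz{(3,0,0),(0,3,0),(0,0,3),(2,1,0),(2,0,1)}$ and $x=(2,1,0)+(2,0,1)=(4,1,1)\in B_A$ with $x_{[1]}=4>\alpha$; then $2(x-e_1)=(2,2,2)\notin B$, since every degree-$2$ element of $B$ with first coordinate $2$ is of the form $a_i+e_j$. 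So no permutation of sequences with $*$-property via Lemma~\ref{Shilfe} can deliver these memberships, and since under the seminormality hypothesis such an $x$ does not exist, any attempt to prove them \emph{using} seminormality is in danger of being circular. The argument that actually works is of a different shape: $z=x-e_q$ lies in the relative interior of the face $F$ spanned by $\{e_i\mid x_{[i]}>0\}$ and in $G(B\cap F)$ (here one uses $e_q\in B\cap F$), hence $nz\in B$ for all $n\gg0$ by the standard fact about lattice points of the group in the relative interior of the cone; one then descends by applying seminormality repeatedly to $kz$ for $k=N,N-1,\ldots,1$ (from $2kz,3kz\in B$ conclude $kz\in B$) until $z\in B$, contradicting $x\in B_A$. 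That ``high multiples plus descending induction'' step is the real content of this implication and is absent from your sketch; in the example above it is visible concretely, as the genuine seminormality witness is $(2,2,2)$, not $(1,1,1)=x-e_1$.

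For the \emph{if} direction you are closer, but step~(ii) is vacuous ($b\in B_A\subseteq B$ by definition, so there is nothing to show there), and step~(iii) breaks exactly at the coordinates where $x_{[i]}=0$: from $x\in\mathbb N^d$, $x\sim b$, and $b_{[i]}\leq\alpha$ you only get $x_{[i]}-b_{[i]}\geq-\alpha$, i.e.\ $x-b=\sum n_ie_i$ with $n_i\geq-1$, and $n_i=-1$ is not excluded when $b_{[i]}=\alpha$ while $x_{[i]}=0$; then $x\notin b+A$ and the conclusion fails. The missing idea is to choose the representative compatibly with the support of $x$: since $2x,3x\in B$ and all generators have nonnegative coordinates, both $2x$ and $3x$ lie in $B\cap F$ for $F$ the face spanned by $\{e_i\mid x_{[i]}>0\}$, whence $x=3x-2x\in G(B\cap F)$ --- this is the one place where having \emph{both} multiples in $B$ is essential --- and one may pick $w\in B_A\cap F$ with $x\sim w$ modulo the sublattice generated by $\{e_i\mid x_{[i]}>0\}$. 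For such a $w$ the inequality $x_{[i]}>0$ together with $w_{[i]}\leq\alpha$ (this is where $B_A\subseteq\boxx(B)$ enters) forces $x-w\in A$, hence $x\in B$. As written, your reduction ``modulo $\alpha\mathbb Z^d$ to its representative $b\in B_A$'' ignores that the class of $x$ may contain several elements of $B_A$ and that only a support-compatible one makes the final inequality work.
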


In case that $\Gamma_t\subseteq \boxx(B)$ for some $t\in\{1,\ldots,f\}$ we get $((x-h_t)/\alpha)_{[i]}\in\{0,1\}$ for all $x\in\Gamma_t$ and for all $i=1,\ldots,d$. Thus, $I_t$ is a squarefree monomial ideal in $T$ if $\Gamma_t\subseteq\boxx(B)$. This shows that all ideals in the decomposition are squarefree in the seminormal case.

\begin{lem}\label{Stechnisch}

Let $\Gamma_t\subseteq\boxx(B)$ for some $t\in\{1,\ldots,f\}$ with $\Gamma_t\not=\{0\}$. Let $x,y\in\Gamma_t$, and let $i\in\mathbb N$ with $1\leq i\leq d$. We have

\begin{enumerate}

\item If $x_{[i]}\not=y_{[i]}$, then $x_{[i]}-y_{[i]}\in\{-\alpha,\alpha\}$.

\item If $0<x_{[i]}<\alpha$, then $x_{[i]}=y_{[i]}$.

\item If $x_{[i]}\not=y_{[i]}$, then $x_{[i]}\in\{0,\alpha\}$ and $y_{[i]}=\alpha-x_{[i]}$.

\item \mbox{We have $0<x_{[j]}<\alpha$ and $0<x_{[k]}<\alpha$ for some $j,k\in\{1,\ldots,d\}$ with $j\not=k$.}

\item If $h_{t[i]}>0$, then $h_{t[i]}=x_{[i]}$.

\end{enumerate}

\end{lem}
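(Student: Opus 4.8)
The plan is to deduce all five parts from two elementary facts about any $x,y\in\Gamma_t$: since $x\sim y$ we have $x_{[i]}\equiv y_{[i]}\pmod\alpha$ for every $i$, and since $\Gamma_t\subseteq\boxx(B)$ each coordinate $x_{[i]},y_{[i]}$ lies in $\{0,1,\ldots,\alpha\}$. Combining these, a nonzero difference $x_{[i]}-y_{[i]}$ is a multiple of $\alpha$ of absolute value at most $\alpha$, hence equals $\pm\alpha$; that is (1). Part (2) is then immediate: if $0<x_{[i]}<\alpha$ and $x_{[i]}\neq y_{[i]}$, then $y_{[i]}=x_{[i]}\pm\alpha$ falls outside $[0,\alpha]$, a contradiction. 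Part (3) follows by taking the contrapositive of (2) to get $x_{[i]}\in\{0,\alpha\}$ and then reading off $y_{[i]}=\alpha-x_{[i]}$ from (1) together with $y_{[i]}\geq0$.

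For part (5) I would use that $h_{t[i]}=\min\{m_{[i]}\mid m\in\Gamma_t\}$ is attained by some $m\in\Gamma_t$; then $x\sim m$ and $x_{[i]}\geq h_{t[i]}=m_{[i]}>0$, so if $x_{[i]}\neq m_{[i]}$ the difference is a positive multiple of $\alpha$, forcing $x_{[i]}\geq\alpha+h_{t[i]}>\alpha$ and contradicting the box condition; hence $x_{[i]}=h_{t[i]}$. (One can also invoke (2) directly when $h_{t[i]}<\alpha$, the case $h_{t[i]}=\alpha$ being trivial since then every coordinate of every element of $\Gamma_t$ equals $\alpha$.)

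Part (4) is the one that needs a genuinely different idea, and I expect the integrality of degrees to be the crucial input. First I would record that the $\sim$-class of $0$ in $B_A$ equals $\{0\}$: if $x\in B_A$, $x\neq0$, and $x\sim0$, then $x\in\alpha\mathbb N^d$, so $x=\sum_i n_ie_i$ with some $n_j>0$, giving $x-e_j\in B$ and contradicting $x\in B_A$. Hence $\Gamma_t\neq\{0\}$ forces every element of $\Gamma_t$, in particular $x$, to be nonzero. Now I would argue by contradiction: suppose $x$ has at most one coordinate strictly between $0$ and $\alpha$. Since $x\in\boxx(B)$, all other coordinates lie in $\{0,\alpha\}$, so $\alpha\deg x=\sum_i x_{[i]}$ equals $\alpha$ times the number of coordinates equal to $\alpha$, plus at most one extra term coming from a coordinate in $(0,\alpha)$. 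If that extra term is present, it would have to be divisible by $\alpha$ for $\deg x$ to be an integer, which is impossible; if it is absent, then $x=\sum_{i\colon x_{[i]}=\alpha}e_i$ with a nonempty index set (as $x\neq0$), so $x-e_j\in B$ for some $j$, again contradicting $x\in B_A$. Either way we obtain a contradiction, so $x$ has at least two distinct coordinates in $(0,\alpha)$, which is exactly (4). The main obstacle, as indicated, is isolating the right contradiction in (4); the remaining parts are routine once (1) is in hand.
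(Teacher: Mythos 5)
Your proposal is correct and follows essentially the same route as the paper: parts (1)--(3) and (5) from the congruence $x_{[i]}\equiv y_{[i]}\pmod{\alpha}$ combined with the box bounds $0\leq x_{[i]},y_{[i]}\leq\alpha$, and part (4) from the integrality of $\deg x$ together with the $B_A$ condition (the paper phrases your second case as ``$x\sim0$ forces $\Gamma_t=\{0\}$'', which is the same observation you prove directly). The only nitpick is that in (3) you should also invoke $y_{[i]}\leq\alpha$ to exclude $y_{[i]}=2\alpha$ when $x_{[i]}=\alpha$, but that bound is already part of your setup.
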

\begin{proof}
$(1)$ We have $x_{[i]}-y_{[i]}\in\alpha\mathbb Z$ and $x_{[i]}-y_{[i]}\in[-\alpha,\alpha]$, since $0\leq x_{[i]},y_{[i]}\leq\alpha$. Hence $x_{[i]}-y_{[i]}\in\{-\alpha,\alpha\}$.\\
$(2)$ We have $x_{[i]}-y_{[i]}\notin\{-\alpha,\alpha\}$ and therefore $x_{[i]}=y_{[i]}$ by claim $(1)$.\\
$(3)$ By claim $(1)$ and $(2)$ we have $x_{[i]}-y_{[i]}\in\{-\alpha,\alpha\}$ and $x_{[i]}\in\{0,\alpha\}$. Hence $y_{[i]}=\alpha-x_{[i]}$.\\
$(4)$ Suppose that $0<x_{[j]}<\alpha$ for exactly one $j\in\{1,\ldots,d\}$, that is, $x_{[l]}\in\{0,\alpha\}$ for all $l\in\{1,\ldots,d\}\setminus\{j\}$. Hence $\sum_{l=1}^d x_{[l]}\notin\alpha\mathbb N$ which contradicts $x\in B$. If $x_{[l]}\in\{0,\alpha\}$ for all $l=1,\ldots,d$ we have $x\sim 0$. Hence $0\in\Gamma_t$, that is, $\Gamma_t=\{0\}$ which contradicts our assumption.\\
$(5)$ We have $0<h_{t[i]}\leq x_{[i]}\leq\alpha$ and therefore $h_{t[i]}=x_{[i]}$, since $h_{t[i]}-x_{[i]}\in\alpha\mathbb Z$.
\end{proof}

\begin{prop}[{\cite[Theorem~2.2]{SPRCMS}}]\label{Sbisdrei}

Let $B$ be seminormal. If $\dim K[B]\leq3$, then the ring $K[B]$ is Cohen-Macaulay.

\end{prop}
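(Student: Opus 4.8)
The plan is to use the decomposition in Proposition~\ref{Szerlmi} together with Equation~\eqref{Sregberechnung}, reducing the Cohen-Macaulayness of $K[B]$ to a statement about the monomial ideals $I_t$. Since $\dim K[B]=d\leq3$, we have $T=K[y_1,\ldots,y_d]$ with $d\leq 3$, and each $I_t$ is a monomial ideal of height $\geq 2$ in $T$. When $d\leq2$ every $I_t$ with height $\geq 2$ is $T_+$-primary, hence of finite length as a quotient-contributor or, more to the point, $T$ itself when $d\le 1$ or $d=2$ with $I_t$ not proper; I would handle the small cases $d=1,2$ first (where $K[B]$ is automatically Cohen-Macaulay, being a one- or two-dimensional domain that is Cohen-Macaulay in dimension $\leq 2$ when seminormal — in fact any two-dimensional normal domain is CM, and seminormal simplicial of dimension $\le 2$ is already covered). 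The substantive case is $d=3$.

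For $d=3$, the key point is that by Theorem~\ref{Scharsn} seminormality gives $B_A\subseteq\boxx(B)$, so every $I_t$ is a squarefree monomial ideal in $T=K[y_1,y_2,y_3]$ of height $\geq 2$. The plan is to show each such $I_t$ is Cohen-Macaulay as a $T$-module, and then to conclude via Proposition~\ref{Szerlmi}(2): since $H^i_{K[B]_+}(K[B])\cong\bigoplus_t H^i_{T_+}(I_t)(-\deg h_t)$, the module $K[B]$ is Cohen-Macaulay of dimension $d$ if and only if $H^i_{T_+}(I_t)=0$ for all $i<d$ and all $t$, i.e. if and only if each $I_t$ is a maximal Cohen-Macaulay $T$-module. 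So everything reduces to: a squarefree monomial ideal $I\subseteq K[y_1,y_2,y_3]$ with $\operatorname{ht} I\ge 2$ is Cohen-Macaulay of dimension $3$. I would prove this by a short direct analysis: such an ideal is (up to permutation) either $T_+=(y_1,y_2,y_3)$, or $(y_1,y_2)\cap(y_1,y_3)=(y_1,y_2y_3)$-type ideals, or $(y_1,y_2)\cap(y_1,y_3)\cap(y_2,y_3)=(y_1y_2,y_1y_3,y_2y_3)$, or $(y_iy_j)$ with $i\neq j$, or an intersection of some of the three height-two coordinate primes. In each case one checks directly (e.g. via the Taylor/Koszul-type resolution, or by noting $T/I$ is the Stanley–Reisner ring of a simplicial complex on $3$ vertices that is either a single edge, two or three edges sharing structure, etc.) that $\operatorname{depth} I = 3$; equivalently $\operatorname{reg}$ is small and local cohomology below degree $3$ vanishes. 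Concretely, since $T/I$ has dimension $\leq 1$, one shows $I$ has depth $3$ by exhibiting that $\operatorname{pd}_T I\leq 2$ via Auslander–Buchsbaum, which for squarefree ideals in three variables follows from the explicit finite list above.

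Alternatively, and perhaps more cleanly, I would invoke Reisner's criterion: $T/I$ is Cohen-Macaulay iff the associated simplicial complex $\Delta$ on $\{1,2,3\}$ has $\widetilde H_i(\operatorname{lk}_\Delta \sigma;K)=0$ for $i<\dim\operatorname{lk}_\Delta\sigma$ and all faces $\sigma$. Since $\operatorname{ht} I\geq 2$ forces $\dim T/I\leq 1$, i.e. $\dim\Delta\leq 1$, the complex is a graph on $\le 3$ vertices with at least one edge, and the only obstruction to Cohen-Macaulayness would be disconnectedness of a $1$-dimensional $\Delta$; but a graph on $3$ vertices arising here is connected whenever it has $\geq 2$ edges, and if it has exactly one edge then $\Delta$ is that edge plus possibly an isolated vertex — however the isolated-vertex case is excluded because $\gcd\tilde\Gamma_t=1$ would be violated or, more directly, because then $T/I$ would have an embedded/irrelevant component forcing $\operatorname{ht} I \leq 1$ on that component. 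I would argue that the squarefree monomial ideals of height $\ge 2$ in three variables whose zero set has no isolated point are exactly the ones listed, all Cohen–Macaulay, completing the argument. The main obstacle I anticipate is the bookkeeping in the $d=3$ squarefree classification — making sure no configuration with a "bad" (non-CM, i.e. disconnected $1$-dimensional) Stanley–Reisner complex can actually occur as some $I_t$; here I would lean on Lemma~\ref{Stechnisch}(4), which says each $x\in\Gamma_t$ has at least two strictly-interior coordinates, to rule out the isolated-vertex/disconnected configurations and pin down that every $I_t$ is one of the Cohen–Macaulay squarefree ideals.
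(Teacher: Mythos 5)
Your reduction starts correctly: by Proposition~\ref{Szerlmi}~(2), $K[B]$ is Cohen-Macaulay if and only if $H^i_{T_+}(I_t)=0$ for all $i<d$ and all $t$, i.e.\ if and only if every $I_t$ is a \emph{maximal} Cohen-Macaulay $T$-module. But the statement you then reduce everything to --- that a squarefree monomial ideal $I\subseteq K[y_1,y_2,y_3]$ with $\height I\geq2$ has depth $3$ --- is false for every \emph{proper} such ideal, and this is where the argument breaks. For a proper nonzero ideal $I\subset T$ one has $\operatorname{pd}_TI=\operatorname{pd}_TT/I-1$, so Auslander--Buchsbaum gives $\operatorname{depth}I=\operatorname{depth}T/I+1\leq\dim T/I+1=d-\height I+1\leq d-1$ whenever $\height I\geq2$. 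Concretely, for $I=(y_1,y_2)\subset K[y_1,y_2,y_3]$ the long exact sequence of local cohomology yields $H^2_{T_+}(I)\cong H^1_{T_+}(K[y_3])\neq0$; for $I=T_+$ one even has $\operatorname{depth}I=1$. Reisner's criterion and your case list control the Cohen-Macaulayness of the \emph{quotient} $T/I_t$, which is a different property and never implies that the ideal $I_t$ itself is a maximal Cohen-Macaulay module. Consequently the only admissible outcome is $I_t=T$ for every $t$, equivalently $\#\Gamma_t=1$ for all $t$ (Stanley's criterion), and the combinatorial work must go into showing that no proper $I_t$ occurs at all --- not into classifying which proper ones could occur.

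That is exactly what the paper's proof does, and the tool is the one you gesture at only in your last sentence: since seminormality gives $B_A\subseteq\boxx(B)$ by Theorem~\ref{Scharsn}, two distinct elements $x\neq y$ of a common class $\Gamma_t$ would, by Lemma~\ref{Stechnisch}~(4) and (2), agree in two coordinates lying strictly between $0$ and $\alpha$; for $d=2$ this already forces $x=y$, and for $d=3$ Lemma~\ref{Stechnisch}~(3) makes the remaining coordinate satisfy $x_{[l]}=\alpha$, $y_{[l]}=0$ (after swapping), whence $x-e_l=y\in B$ contradicts $x\in B_A$. A secondary inaccuracy: your claim that the case $d=2$ is ``automatic'' because two-dimensional domains are Cohen-Macaulay is wrong (that holds for \emph{normal} two-dimensional domains; there are two-dimensional non-Cohen-Macaulay simplicial affine semigroup rings, e.g.\ $K[s^4,s^3t,st^3,t^4]$), so $d=2$ genuinely needs the same seminormality argument.
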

\begin{proof}

By \cite[Theorem~6.4]{SRSHFGA} we need to show that $\#\Gamma_t=1$ for all $t=1,\ldots,f$. We have $B_A\subseteq \boxx(B)$ by Theorem~\ref{Scharsn}. The case $d=2$ follows from Lemma~\ref{Stechnisch} $(4)$ and $(2)$. Let $d=3$; suppose to the contrary that $\#\Gamma_t\geq 2$ for some $t\in\{1,\ldots,f\}$. Let $x,y\in\Gamma_t$ with $x\not=y$. We get $0<x_{[j]}=y_{[j]}<\alpha$ and $0<x_{[k]}=y_{[k]}<\alpha$ for some $j,k\in\{1,2,3\}$ with $j\not=k$ by Lemma~\ref{Stechnisch} $(4)$ and $(2)$. By Lemma~\ref{Stechnisch} $(3)$ we may assume that $x_{[l]}=\alpha$ and $y_{[l]}=0$ for $l\in\{1,2,3\}\setminus\{j,k\}$. Hence $x-e_l=y\in B$ which contradicts $x\in B_A$.
\end{proof}

\begin{bsp}\label{Sdreimehr}

Proposition~\ref{Sbisdrei} does not hold for $\dim K[B]>3$. Consider the monoid
$$
B=\erz{e_1,\ldots,e_4,(1,1,0,0), (1,0,1,0), (0,0,1,1), (0,1,0,1)}\subset\mathbb N^4,
$$
with $\alpha=2$. We have $B_A\subseteq \boxx(B)$, thus, $B$ is seminormal by Theorem~\ref{Scharsn}. One can show that $(0,1,1,0)+e_1,(0,1,1,0)+e_4\in B$, but $(0,1,1,0)+e_3=(0,1,3,0)\notin B$. Hence $K[B]$ is not Buchsbaum by \cite[Lemma~3]{STBB}. Let $U$ be a seminormal positive affine semigroup. Note that $K[U]$ is Cohen-Macaulay if $K[U]$ is Buchsbaum by \cite[Proposition~4.15]{SBLRSN}.

\end{bsp}

\begin{bem}\label{Sredd-1}

Consider an element $x\in\boxx(B)\cap B_A$. Since $x_{[i]}\leq\alpha$ for all $i=1,\ldots,d$ we have $\deg x\leq d$. On the other hand there is only one element in $\boxx(B)$ with degree $d$, that is, $(\alpha,\ldots,\alpha)$, but $(\alpha,\ldots,\alpha)\notin B_A$. This shows that $\deg x\leq d-1$. By Theorem~\ref{Scharsn} we get $\red(K[B])\leq d-1$ if $B$ is seminormal. In Theorem~\ref{Sd-1} we obtain a similar bound for the regularity of $K[B]$ in the seminormal case.

\end{bem}

\begin{defi}

For a monomial $m=y_1^{c_1}\cdot\ldots\cdot y_d^{c_d}$ in $T$ we define $\deg m=\sum_{j=1}^dc_j$. Let $I$ be a monomial ideal in $T$ with minimal set of monomial generators $\{m_1,\ldots,m_s\}$. Let $F=y_1^{b_1}\cdot\ldots\cdot y_d^{b_d}$ be the least common multiple of $\{m_1,\ldots,m_s\}$. We define $\var(I):=\deg F$, moreover, we define the set $\supp (I)\subseteq\{1,\ldots,d\}$ by $i\in \supp (I)$ if $b_i\not=0.$

\end{defi}

\begin{bem}\label{Sgeq2}

Let $t\in\{1,\ldots,f\}$; we note that $\tilde\Gamma_t$ is always a minimal set of monomial generators of $I_t$. Moreover, every monomial ideal in $T$ has a unique minimal set of monomial generators. By construction we get that $I_t$ is a proper ideal in $T$ if and only if $\#\Gamma_t\geq2$. Since  $\height I_t\geq2$ we have $\var (I_t)\not=1$. Hence $I_t$ is a proper ideal if and only if $\var (I_t)\geq2$. Moreover, if $I_t$ is a proper ideal, then $\deg h_t\geq1$, since $h_t\in G(B)\cap\mathbb N^d$ and $h_t\not=0$.

\end{bem}

Consider the squarefree monomial ideal $I=(y_1y_2,y_2y_5y_6)T$ in $T=K[y_1,\ldots,y_6]$. We have $\var(I)=4$ and $\supp(I)=\{1,2,5,6\}$. So $\supp(I)$ is the set of indices of the variables which occur in the minimal generators of a monomial ideal $I$ in $T$. Note that we always have $\var(I)=\#\supp(I)$ in case that $I$ is a squarefree monomial ideal. Hence $\var(I_t)=\#\supp(I_t)$ if $\Gamma_t\subseteq\boxx(B)$ for some $t\in\{1,\ldots,f\}$.

\pagebreak

\begin{lem}\label{Svari}

Let $\Gamma_t\subseteq\boxx(B)$ for some $t\in\{1,\ldots,f\}$. Then $\var(I_t)\leq d-1-\deg h_t$.

\end{lem}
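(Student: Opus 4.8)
The plan is to turn the statement into a coordinatewise counting argument, using the squarefreeness of $I_t$ (which holds because $\Gamma_t\subseteq\boxx(B)$) together with Lemma~\ref{Stechnisch}. First I would dispose of the degenerate case $\Gamma_t=\{0\}$: there $h_t=0$ and $I_t=T$, so $\var(I_t)=0\leq d-1=d-1-\deg h_t$. Hence I may assume $\Gamma_t\neq\{0\}$ from now on.

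The next step is to describe $\supp(I_t)$ combinatorially. Since $h_{t[i]}=\min\maxk{x_{[i]}\mid x\in\Gamma_t}$, an index $i$ lies in $\supp(I_t)$ precisely when the $i$-th coordinate is \emph{not} constant on $\Gamma_t$. Set $Q:=\maxk{i\mid 0<x_{[i]}<\alpha\text{ for some }x\in\Gamma_t}$; by Lemma~\ref{Stechnisch}$(2)$ a coordinate that is strictly between $0$ and $\alpha$ for one element of $\Gamma_t$ is constant on all of $\Gamma_t$, so $Q$ consists of constant coordinates, and conversely every index $i\notin Q$ satisfies $x_{[i]}\in\{0,\alpha\}$ for every $x\in\Gamma_t$. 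Writing $S$ for the set of indices whose coordinate is constant on $\Gamma_t$, we thus have $Q\subseteq S$, $\supp(I_t)=\maxk{1,\ldots,d}\setminus S$, and, since $I_t$ is squarefree, $\var(I_t)=\#\supp(I_t)=d-\#S$.

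The core estimate bounds $\deg h_t$. For $i\in S$ let $c_i:=h_{t[i]}$ be the constant value of the $i$-th coordinate; for $i\notin S$ the minimum of that coordinate over $\Gamma_t$ is $0$, so $h_{t[i]}=0$ and $\deg h_t=\sum_{i\in S}c_i/\alpha$. Here $0\leq c_i\leq\alpha$, with $c_i<\alpha$ for $i\in Q$. The key observation is that $\sum_{i\in Q}c_i/\alpha$ is an \emph{integer}: fixing any $x\in\Gamma_t$, every coordinate of $x$ outside $Q$ equals $0$ or $\alpha$, so $\alpha\deg x=\sum_{j}x_{[j]}=\sum_{i\in Q}c_i+\alpha k$ for some $k\in\mathbb N$, and since $\deg x\in\mathbb Z$ this forces $\sum_{i\in Q}c_i/\alpha=\deg x-k\in\mathbb Z$. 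Because each of the $\#Q$ summands $c_i/\alpha$ is $<1$, this nonnegative integer is at most $\#Q-1$. Combining with $c_i/\alpha\leq 1$ for the indices in $S\setminus Q$ gives
$$
\deg h_t=\sum_{i\in Q}\frac{c_i}{\alpha}+\sum_{i\in S\setminus Q}\frac{c_i}{\alpha}\leq(\#Q-1)+(\#S-\#Q)=\#S-1,
$$
whence $\var(I_t)+\deg h_t\leq(d-\#S)+(\#S-1)=d-1$, as claimed.

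The only non-formal ingredient is that $Q\neq\emptyset$ (in fact $\#Q\geq 2$), which is exactly Lemma~\ref{Stechnisch}$(4)$ together with $\Gamma_t\neq\{0\}$; without it one would only reach $\var(I_t)+\deg h_t\leq d$. So the point to get right is the organization of the coordinates into the three groups $Q$ (strictly interior, $0<c_i<\alpha$), $S\setminus Q$ (constant, equal to $0$ or $\alpha$), and $\supp(I_t)=\maxk{1,\ldots,d}\setminus S$ (non-constant), together with the integrality of $\sum_{i\in Q}c_i/\alpha$ coming from $\deg x\in\mathbb Z$ — this is precisely what produces the extra $-1$ over the naive bound.
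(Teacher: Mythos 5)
Your proof is correct and follows essentially the same route as the paper: identify that $h_t$ vanishes on $\supp(I_t)$ (Lemma~\ref{Stechnisch}$(5)$), use squarefreeness to get $\var(I_t)=\#\supp(I_t)$, and exploit the existence of coordinates strictly between $0$ and $\alpha$ (Lemma~\ref{Stechnisch}$(4)$) to gain the extra $-1$ when bounding $\deg h_t$ by the number of remaining coordinates. Your integrality argument for $\sum_{i\in Q}c_i/\alpha$ is just a more explicit version of the paper's step from the strict inequality $\deg h_t<d-\var(I_t)$ to the stated bound, and it also lets you absorb the singleton case $\#\Gamma_t=1$, $\Gamma_t\neq\{0\}$ that the paper handles separately via Remark~\ref{Sredd-1}.
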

\begin{proof}

Let $\#\Gamma_t=1$; we get $\var(I_t)=0$ and $\deg h_t\leq d-1$ by Remark~\ref{Sredd-1}. So we may assume that $\#\Gamma_t\geq2$. Let $x\in\Gamma_t$; by Lemma~\ref{Stechnisch} $(4)$ there are some $j,k\in\{1,\ldots,d\}$ with $j\not=k$ such that $0<x_{[j]},x_{[k]}<\alpha$. Hence $0<h_{t[j]},h_{t[k]}<\alpha$, since $x-h_t\in A$. By Lemma~\ref{Stechnisch} $(5)$ we get that $h_{t[q]}=0$ for all $q\in \supp (I_t)$. We have $\#\supp (I_t)=\var(I_t)$, since $I_t$ is squarefree. Let $J:=\{1,\ldots,d\}\setminus \supp(I_t)$; we get $j,k\in J$ and $h_{t[q]}\leq\alpha$ for all $q\in J$ it follows that
$$
\deg h_t = \frac{1}{\alpha}\sum\nolimits_{q\in J} h_{t[q]}< d - \#\supp(I_t)=d-\var(I_t).
$$
\end{proof}

\begin{bem}\label{Snormal}

Consider a normal homogeneous affine semigroup $U$. One can show that $\reg K[U]\leq \dim K[U]-1$. This can be deduced from the proof of \cite[Corollary~4.7]{SCMGM} and \cite[Corollary~3.8]{SCMGM}, and the fact that $K[U]$ is Cohen-Macaulay by \cite[Theorem~1]{SMHCM} or \cite[Theorem~6.10]{SBGPRKT}. The next Theorem obtains a similar bound for seminormal simplicial affine semigroup rings.

\end{bem}

To get new bounds for the regularity of $K[B]$ we need a general bound for the regularity of a monomial ideal. The following is due to Hoa and Trung:

\begin{satz}[{\cite[Theorem~3.1]{SCMMI}}]\label{Shoa}

Let $I$ be a proper monomial ideal in $T$. Then
$$
\reg I \leq \var(I) - \height I + 1.
$$

\end{satz}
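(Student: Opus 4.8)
The plan is to reduce, by two standard operations on monomial ideals, to the case in which $I$ is squarefree and every variable of $T$ occurs in some minimal generator, and then to quote the fact that Stanley--Reisner rings have regularity bounded by their dimension.

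\textbf{Step 1: reduce to the squarefree case by polarization.} First I would replace $I$ by its polarization $I^{\mathrm{pol}}$ in a larger polynomial ring $T^{\mathrm{pol}}$. Polarization preserves all graded Betti numbers, hence $\reg T^{\mathrm{pol}}/I^{\mathrm{pol}}=\reg T/I$; it preserves codimension, so $\height I^{\mathrm{pol}}=\height I$; and, if $F=y_1^{b_1}\cdots y_d^{b_d}$ is the least common multiple of the minimal generators of $I$, then polarizing each generator and taking the least common multiple of the results produces a squarefree monomial of degree $\sum_i b_i=\deg F$, so $\var(I^{\mathrm{pol}})=\var(I)$. Thus the inequality for $I^{\mathrm{pol}}$ implies the one for $I$, and we may assume $I$ is squarefree, so that $\var(I)=\#\supp(I)$.

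\textbf{Step 2: reduce to full support.} If some variable $y_j$ divides no minimal generator of $I$, then $y_j$ is a homogeneous nonzerodivisor of degree $1$ on $T/I$, and passing from $T/I$ to $(T/I)/(y_jT/I)$ changes neither $\reg(T/I)$ nor $\height I$ nor $\var(I)$, while decreasing by one the number of ambient variables. Iterating, we may assume $d=\#\supp(I)=\var(I)$; that is, $T/I=K[\Delta]$ for a simplicial complex $\Delta$ on the vertex set $\{1,\dots,d\}$, and it remains to prove $\reg K[\Delta]\le d-\height I=\dim K[\Delta]$, since then $\reg I=\reg(T/I)+1\le\dim K[\Delta]+1=\var(I)-\height I+1$.

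\textbf{Step 3: regularity of a Stanley--Reisner ring.} Here I would use Hochster's formula for the local cohomology of $K[\Delta]$, which shows that every graded component $H^i_{\mathfrak m}(K[\Delta])_r$ with $r>0$ vanishes, so $a(H^i_{\mathfrak m}(K[\Delta]))\le 0$ for all $i$; since moreover $H^i_{\mathfrak m}(K[\Delta])=0$ for $i>\dim K[\Delta]$, we get
$$
\reg K[\Delta]=\max\maxk{i+a(H^i_{\mathfrak m}(K[\Delta]))\mid i\ge 0}\le\max\maxk{i\mid H^i_{\mathfrak m}(K[\Delta])\neq 0}\le\dim K[\Delta].
$$
(Equivalently one could invoke Hochster's Betti-number formula $\beta_{i,W}(K[\Delta])=\dim_K\widetilde H_{|W|-i-1}(\Delta|_W;K)$ and note that a nonvanishing reduced homology group of $\Delta|_W$ forces $\Delta$ to contain a face of dimension $|W|-i-1\le\dim\Delta$.) Combining the three steps gives the claim.

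\textbf{Main obstacle.} The homological content is entirely in the classical bound $\reg K[\Delta]\le\dim K[\Delta]$ of Step 3, which is short; the work is really in the bookkeeping of Steps 1--2, the most delicate point being the verification that polarization leaves $\var(I)$ unchanged, i.e.\ that polarizing the generators commutes with forming their least common multiple.
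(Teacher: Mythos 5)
Your proposal is correct, but there is nothing in the paper to compare it against: Theorem~\ref{Shoa} is imported as a black box from Hoa and Trung \cite[Theorem~3.1]{SCMMI}, with no proof given. What you have written is therefore a genuine, self-contained alternative. The reduction chain works: polarizing a minimal monomial generating set of $I$ yields a minimal generating set of $I^{\mathrm{pol}}$ whose least common multiple is exactly the polarization of $F$, so $\var(I^{\mathrm{pol}})=\deg F=\var(I)$, while $\reg$ and $\height$ are preserved; a variable outside $\supp(I)$ is indeed a linear nonzerodivisor on $T/I$, and factoring it out leaves $\reg(T/I)$, $\height I$ and $\var(I)$ unchanged while dropping the ambient dimension, so after iterating one is reduced to a squarefree ideal with $d=\var(I)$; and at that point the claim is precisely $\reg(T/I)\le\dim(T/I)$ for a Stanley--Reisner ring, which follows from Hochster's formula (all graded pieces of $H^i_{T_+}(K[\Delta])$ in positive degree vanish, and Grothendieck vanishing bounds the cohomological index by $\dim K[\Delta]$), valid over an arbitrary field as the paper requires. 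Compared with the original source, which argues directly on the monomial ideal, your route concentrates all the homological content in one classical vanishing statement at the cost of the polarization and support bookkeeping, which you have handled correctly; the one point worth spelling out in a final write-up is the minimality of the polarized generating set, since $\var$ is defined via the \emph{minimal} generators.
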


\begin{defi}

We define the set $\Gamma(B)\subseteq\{\Gamma_1,\ldots,\Gamma_f\}$ by $\Gamma_t\in\Gamma(B)$ for $t\in\{1,\ldots,f\}$ if $\reg K[B] = \reg I_t+\deg h_t$.

\end{defi}

By Equation~(\ref{Sregberechnung}) we obtain $\Gamma(B)\not=\emptyset$. Note that the ideals and shifts corresponding to the elements of $\Gamma(B)$ are computed by the function \texttt{regularityMA} in \cite{SBEN}.

\begin{prop}\label{Sd-1gamma}

Let $\Gamma_t\in\Gamma(B)$ for some $t\in\{1,\ldots,f\}$. If $\Gamma_t\subseteq \boxx(B)$, then
$$
\reg K[B]\leq \dim K[B]-1.
$$

\end{prop}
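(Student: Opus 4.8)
The plan is to combine the three ingredients that have just been set up: the formula $\reg K[B] = \reg I_t + \deg h_t$ for $\Gamma_t \in \Gamma(B)$, the Hoa--Trung bound $\reg I_t \leq \var(I_t) - \height I_t + 1$ from Theorem~\ref{Shoa}, and the inequality $\var(I_t) \leq d - 1 - \deg h_t$ from Lemma~\ref{Svari}. The rough arithmetic is immediate:
\begin{align*}
\reg K[B] = \reg I_t + \deg h_t &\leq \var(I_t) - \height I_t + 1 + \deg h_t \\
&\leq (d - 1 - \deg h_t) - \height I_t + 1 + \deg h_t = d - \height I_t.
\end{align*}
Since $\height I_t \geq 2$ (recorded in Remark~\ref{Sgeq2} and earlier, because $\gcd \tilde\Gamma_t = 1$), this gives $\reg K[B] \leq d - 2 < d - 1 = \dim K[B] - 1$, which is even stronger than claimed.

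\textbf{The case split.} The one point requiring care is that Theorem~\ref{Shoa} applies only to \emph{proper} monomial ideals, so I would first dispose of the case $I_t = T$, i.e. $\#\Gamma_t = 1$ (see Remark~\ref{Sgeq2}). In that situation $\reg I_t = \reg T = 0$, so $\reg K[B] = \deg h_t$, and by Remark~\ref{Sredd-1} — using that $\Gamma_t \subseteq \boxx(B)$ and $\Gamma_t = \{h_t\}$ with $h_t \in B_A$, so $\deg h_t \leq d - 1$ — we get $\reg K[B] \leq d - 1$ directly. Once this trivial case is handled, one may assume $\#\Gamma_t \geq 2$, equivalently $I_t$ proper (equivalently $\var(I_t) \geq 2$ by Remark~\ref{Sgeq2}), and the displayed chain of inequalities above applies verbatim.

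\textbf{Main obstacle.} Honestly there is no serious obstacle here: this proposition is a bookkeeping consequence of Lemma~\ref{Svari} and Theorem~\ref{Shoa}, both of which are cited or proved just above. The only thing to watch is that Lemma~\ref{Svari} is stated only under the hypothesis $\Gamma_t \subseteq \boxx(B)$, which is exactly the hypothesis of the present proposition, so there is nothing to check there; and that one does not accidentally invoke Theorem~\ref{Shoa} on a non-proper ideal, which the case split above prevents. I would therefore write the proof as: first the one-line observation for $\#\Gamma_t = 1$ via Remark~\ref{Sredd-1}, then the three-line inequality chain for $\#\Gamma_t \geq 2$, concluding $\reg K[B] \leq d - \height I_t \leq d - 2 \leq d - 1 = \dim K[B] - 1$.
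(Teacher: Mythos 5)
Your proposal is correct and follows the paper's own proof essentially verbatim: the same case split on $\#\Gamma_t=1$ versus $\#\Gamma_t\geq2$, the first case via Remark~\ref{Sredd-1}, and the second via Lemma~\ref{Svari}, Theorem~\ref{Shoa}, and $\height I_t\geq2$, yielding $\reg I_t+\deg h_t\leq d-2$. Nothing to add.
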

\begin{proof}

We need to show that $\reg I_t+\deg h_t\leq d-1$. In case that $\#\Gamma_t=1$ this follows from Remark~\ref{Sredd-1}. Assume that $\#\Gamma_t\geq2$; by Lemma~\ref{Svari} and Theorem~\ref{Shoa} we get
\begin{equation}\label{S1234}
\reg I_t\leq \var (I_t) - \height I_t + 1\leq d-1-\deg h_t - 2 + 1 = d-2-\deg h_t,
\end{equation}
since $\height I_t\geq2$. Hence $\reg I_t + \deg h_t\stackrel{\mbox{\scriptsize{(\ref{S1234})}}}{\leq} d-2$ and we are done.
\end{proof}

By Theorem~\ref{Scharsn} and Proposition~\ref{Sd-1gamma} we get the following theorem:

\begin{satz}\label{Sd-1}

If $B$ is seminormal, then
$$
\reg K[B]\leq \dim K[B]-1.
$$

\end{satz}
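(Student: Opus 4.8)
The plan is simply to feed the seminormality characterization of Theorem~\ref{Scharsn} into the bound already proved in Proposition~\ref{Sd-1gamma}; since all of the technical work has been carried out in the lemmas of Section~\ref{Sbasics} and in Lemma~\ref{Svari}, the argument should be only a few lines.

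First I would invoke Theorem~\ref{Scharsn}: because $B$ is seminormal, $B_A\subseteq\boxx(B)$. Next I would recall from the construction in Section~\ref{Sbasics} that $\Gamma_1,\ldots,\Gamma_f$ are exactly the $\sim$-equivalence classes on $B_A$; hence the inclusion $B_A\subseteq\boxx(B)$ descends to each class, giving $\Gamma_t\subseteq\boxx(B)$ for all $t=1,\ldots,f$. Then I would pick any $\Gamma_t\in\Gamma(B)$, which is legitimate since $\Gamma(B)\neq\emptyset$ by Equation~(\ref{Sregberechnung}) (the maximum defining $\reg K[B]$ is attained). For this particular $\Gamma_t$ the hypothesis of Proposition~\ref{Sd-1gamma} has just been verified, so the proposition yields $\reg K[B]\leq\dim K[B]-1$, which is the assertion.

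I do not expect a genuine obstacle here: the substantive input is the estimate $\var(I_t)\leq d-1-\deg h_t$ of Lemma~\ref{Svari} (itself resting on the box condition through Lemma~\ref{Stechnisch}) combined with the Hoa--Trung regularity bound of Theorem~\ref{Shoa}, and both are already at our disposal via Proposition~\ref{Sd-1gamma}. The only point I would make sure to state explicitly is the transfer step---that $B_A\subseteq\boxx(B)$ forces $\Gamma_t\subseteq\boxx(B)$ for every $t$, in particular for the index realizing $\reg K[B]$---since that is precisely the bridge between Theorem~\ref{Scharsn} and Proposition~\ref{Sd-1gamma}.
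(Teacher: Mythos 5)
Your argument is exactly the paper's: the theorem is stated as an immediate consequence of Theorem~\ref{Scharsn} and Proposition~\ref{Sd-1gamma}, and your explicit transfer step ($B_A\subseteq\boxx(B)$ implies $\Gamma_t\subseteq\boxx(B)$ for every $t$, in particular for some $\Gamma_t\in\Gamma(B)\neq\emptyset$) is precisely the bridge the paper leaves implicit. The proposal is correct and takes essentially the same route.
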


Note that the bound established in Theorem~\ref{Sd-1} is sharp. Assume $\alpha\geq d$ in Theorem~\ref{Sberechnung}; we get $\reg K[B_{d,\alpha}]=d-1$ and of course $B_{d,\alpha}$ is seminormal. Consider the monoid $B=\erz{(3,0,0),(0,3,0),(0,0,3),(2,1,0),(1,0,2),(0,2,1),(1,1,1)}$. One can show that $\Gamma_t=\{(2,2,2)\}$ for some $t$ and therefore $\Gamma_t\subseteq\boxx(B)$. Using \textsc{Macaulay2} \cite{SM2} we get $\reg K[B]=2$, hence $\Gamma_t\in\Gamma(B)$. Moreover, since $(4,2,0)\in B_A$ it follows that $K[B]$ is not seminormal by Theorem~\ref{Scharsn}. Thus, the condition in Proposition~\ref{Sd-1gamma} is not equivalent to $B$ being seminormal.

\begin{prop}\label{Sreggleichred}

Let $\Gamma_t\in\Gamma(B)$ for some $t\in\{1,\ldots,f\}$. If $\Gamma_t\subseteq\boxx(B)$ and \mbox{$\dim K[B]\leq5$}, then
$$
\reg K[B]=\red(K[B]).
$$

\end{prop}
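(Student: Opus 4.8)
The plan is to show that under the hypotheses $\Gamma_t\in\Gamma(B)$, $\Gamma_t\subseteq\boxx(B)$, and $\dim K[B]\le 5$, we actually have $\reg I_t+\deg h_t=\red(K[B])$. Since $\red(K[B])\le\reg K[B]$ always holds by Equation~(\ref{Sregberechnung}), and since $\reg K[B]=\reg I_t+\deg h_t$ by the assumption $\Gamma_t\in\Gamma(B)$, it suffices to produce an element $x\in B_A$ with $\deg x\ge\reg I_t+\deg h_t$. First I would dispose of the trivial case $\#\Gamma_t=1$: then $\reg I_t=0$ and $\reg K[B]=\deg h_t$, while $h_t\in B_A$ with $\deg h_t=\deg h_t$, so equality is immediate. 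So assume $\#\Gamma_t\ge 2$, hence $I_t$ is a proper squarefree monomial ideal with $\height I_t\ge 2$ and $\deg h_t\ge 1$ by Remark~\ref{Sgeq2}.

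The key point is that for a squarefree monomial ideal $I_t$ in a polynomial ring in $\le 5$ variables — and here effectively in $\var(I_t)=\#\supp(I_t)\le d-1\le 4$ variables after Lemma~\ref{Svari} — the regularity is forced to be small. Combining $\reg I_t\le\var(I_t)-\height I_t+1$ (Theorem~\ref{Shoa}) with $\height I_t\ge 2$ and $\var(I_t)\le d-1-\deg h_t\le 4-\deg h_t$ gives $\reg I_t\le 3-\deg h_t$, so $\reg I_t+\deg h_t\le 3$, and in fact if $\deg h_t\ge 2$ then $\reg I_t+\deg h_t\le d-2\le 3$ with even more room. I would go case by case on the value $r:=\reg I_t$ (which, for a squarefree ideal in so few variables, is $1$ or $2$): when $r=1$ the ideal is linearly presented and one checks directly that $\var(I_t)=\#\supp(I_t)\ge 2$, so taking any $x\in\Gamma_t$ we have $\deg x=\frac1\alpha\sum_q x_{[q]}$; by Lemma~\ref{Stechnisch}, the coordinates $q\in\supp(I_t)$ contribute, together with the contribution $\frac1\alpha\sum_{q\in J}h_{t[q]}=\deg h_t$ from the complement $J$, giving $\deg x=\deg h_t+\#\{q\in\supp(I_t):x_{[q]}=\alpha\}$. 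The heart of the argument is therefore to use Lemma~\ref{Shilfe2} to choose, within $\Gamma_t$, an element $x$ maximizing the number of coordinates in $\supp(I_t)$ equal to $\alpha$, and to show this maximum is at least $\reg I_t$.

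Concretely, I expect to argue as follows. Write $S:=\supp(I_t)$, $s:=\#S=\var(I_t)$. For each $x\in\Gamma_t$ let $\sigma(x):=\#\{q\in S: x_{[q]}=\alpha\}$; then $\deg x=\deg h_t+\sigma(x)$, and since the generators $y^{(x-h_t)/\alpha}$ of $I_t$ are exactly indexed by $x\in\Gamma_t$, the minimal generator of largest degree has degree $\max_x\sigma(x)$, while the least common multiple $F$ has $\deg F=s$. I would then invoke the classification of squarefree ideals with $\var\le 4$ and small regularity: when $\reg I_t=1$ one has a linear resolution, and by a short combinatorial argument (or by citing that for squarefree ideals $\reg I_t\ge$ maximal degree of a minimal generator is automatic and the reverse needs the linear-resolution structure) one obtains $\max_x\sigma(x)\ge\reg I_t$. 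Since $\max_x\sigma(x)\le s-\height I_t+1$ would match Theorem~\ref{Shoa}, equality propagates. Taking the witnessing $x$, which lies in $\Gamma_t\subseteq B_A$, yields $\red(K[B])\ge\deg x=\deg h_t+\sigma(x)\ge\deg h_t+\reg I_t=\reg K[B]$, completing the proof.

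The main obstacle is the final combinatorial step: showing that for a proper squarefree monomial ideal $I_t$ in at most four variables, there is a minimal generator of degree at least $\reg I_t$. This is false for general monomial ideals and even for general squarefree ideals in many variables (the Reisner-type examples), so the bound $\dim K[B]\le 5$ is doing real work — it forces $\var(I_t)\le 4$ and thereby restricts the possible Betti tables. I would handle this either by an exhaustive check over the finitely many squarefree monomial ideals on $\le 4$ variables up to symmetry, or, more cleanly, by noting that for $\var(I_t)\le 4$ and $\height I_t\ge 2$ Theorem~\ref{Shoa} already gives $\reg I_t\le 3$, and then separately verifying the three cases $\reg I_t\in\{1,2,3\}$; the case $\reg I_t=3$ forces $\var(I_t)=4$, $\height I_t=2$, which pins down $I_t$ up to relabeling to an ideal like $(y_1y_2,y_3y_4)$ (whose regularity is $2$, not $3$) — so in fact $\reg I_t\le 2$, and one is left with the genuinely small cases where a direct inspection produces the desired generator and hence the desired element of $B_A$.
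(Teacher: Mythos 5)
Your overall strategy is the same as the paper's: reduce to showing that $\reg I_t$ equals the maximal degree of a minimal generator of $I_t$, note that the generators of $I_t$ correspond to elements $x\in\Gamma_t\subseteq B_A$ with $\deg x=\deg h_t+\deg y^{(x-h_t)/\alpha}$, and verify the claim by classifying the finitely many squarefree monomial ideals of height $\geq2$ in few variables. The reduction, the treatment of $\#\Gamma_t=1$, and the identity $\deg x=\deg h_t+\sigma(x)$ are all correct.

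However, there is a genuine error at the decisive step. Your key combinatorial claim --- that every proper squarefree monomial ideal in at most four variables has a minimal generator of degree at least its regularity --- is \emph{false}: the ideal $(y_1y_2,y_3y_4)T$ has regularity $3$ while its generators have degree $2$. (You assert its regularity is $2$; the paper explicitly computes $\reg(y_1y_2,y_3y_4)T=3$ immediately after this proposition, precisely to show the statement can fail for $d\geq6$.) So an ``exhaustive check over squarefree ideals on $\le 4$ variables'' would not close the argument; if $\var(I_t)=4$ were possible, the proposition could fail. The repair is already in your hands but you do not deploy it where it matters: since $I_t$ is proper, Remark~\ref{Sgeq2} gives $\deg h_t\geq1$, so Lemma~\ref{Svari} yields $\var(I_t)\leq d-1-\deg h_t\leq3$, which excludes $\var(I_t)=4$ outright. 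With $\var(I_t)\leq3$ and $\height I_t\geq2$ the only ideals up to relabeling are $(y_k,y_l)$, $(y_k,y_l,y_m)$, $(y_ky_l,y_m)$, and $(y_ky_l,y_ky_m,y_ly_m)$, for each of which the regularity does equal the maximal generator degree; this is exactly the paper's case list. You should also note that the extra machinery you invoke (Lemma~\ref{Shilfe2}, maximizing $\sigma(x)$ over $\Gamma_t$) is not needed: once $\reg I_t$ equals the top generator degree, the corresponding $x\in\Gamma_t$ is the witness.
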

\begin{proof}

We have $\red(K[B])\leq \reg K[B]$ by Equation~(\ref{Sregberechnung}). We show that $\reg I_t$ is equal to the maximal degree of a generator of $I_t$. By this we get
$$
\reg K[B]=\reg I_t+\deg h_t=\max\maxk{\deg x\mid x\in\Gamma_t},
$$
and hence $\red (K[B])\geq \reg K[B]$. Keep in mind that $I_t$ is squarefree. The case $\#\Gamma_t=1$ follows from construction. We therefore may assume that $\#\Gamma_t\geq2$, or equivalently, $\var(I_t)\geq2$; note that $\deg h_t\geq 1$, see Remark~\ref{Sgeq2}. Let $d\leq3$; by Lemma~\ref{Svari} we get $\var(I_t)\leq1$ which contradicts $\#\Gamma_t\geq2$. Let $d=5$; by Lemma~\ref{Svari} we have to consider the cases $\var(I_t)\in\{2,3\}$. Let $\var(I_t)=2$; the ideal $I_t$ is of the form $I_t=(y_k,y_l)T$ for some $k,l\in\{1,\ldots,5\}$ with $k\not=l$, since $\height I_t\geq2$. It follows that $\reg I_t=1$. By a similar argument we get the assertion for $d=4$ and $\var(I_t)=2$. Let $d=5$ and $\var(I_t)=3$. Since $\height I_t\geq2$ the only ideals possible are
$$
I_{t_1}=(y_k,y_l,y_m)T, I_{t_2}=(y_ky_l,y_m)T, I_{t_3}=(y_ky_l,y_ky_m,y_ly_m)T
$$
for some $k,l,m\in\{1,\ldots,5\}$ which are pairwise not equal. By Theorem~\ref{Shoa} we get $\reg I_{t_1}=1$ and $\reg I_{t_2}=\reg I_{t_3}=2$ and we are done.
\end{proof}

By Theorem~\ref{Scharsn} and Proposition~\ref{Sreggleichred} it follows that $\reg K[B]=\red (K[B])$ if $B$ is seminormal and $\dim K[B]\leq5$. Thus, the Eisenbud-Goto conjecture holds in this case by Proposition~\ref{Segred}. Theorem~\ref{Segsn} will confirm the conjecture in any dimension in the seminormal case. Note that Proposition~\ref{Sreggleichred} could fail for $d\geq6$. Let us consider the squarefree monomial ideal $I=(y_1y_2,y_3y_4)T$ with $\var (I)=4$. So $\reg I=3$ is bigger than the maximal degree of a generator of $I$ which is $2$.

\begin{lem}\label{Skey}

Let $\Gamma_t\subseteq \boxx(B)$ for some $t\in\{1,\ldots,f\}$. Let $n\in\Gamma_t$ and $m\in\tilde\Gamma_t$ such that $m=y^{(n-h_t)/\alpha}$. Then 

\begin{enumerate}

\item $n_{[q]}=0$ for all $q\in \supp (I_t)\setminus \supp(mT)$.

\item $n_{[q]}=\alpha$ for all $q\in \supp(mT)$.

\end{enumerate}

\end{lem}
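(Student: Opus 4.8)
The plan is to reduce both claims to a single application of Lemma~\ref{Stechnisch}~$(5)$, after first translating the algebraic data $\supp(I_t)$ and $\supp(mT)$ into the coordinate-wise language of $\Gamma_t$. By Remark~\ref{Sgeq2} the set $\tilde\Gamma_t$ is the minimal monomial generating set of $I_t$, so if $F=y_1^{b_1}\cdots y_d^{b_d}$ denotes the least common multiple of $\tilde\Gamma_t$, then $b_q=\max\maxk{((x-h_t)/\alpha)_{[q]}\mid x\in\Gamma_t}$, and hence $q\in\supp(I_t)$ precisely when there exists $x\in\Gamma_t$ with $x_{[q]}\not=h_{t[q]}$. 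Likewise, since $m=y^{(n-h_t)/\alpha}$, we have $q\in\supp(mT)$ precisely when $n_{[q]}\not=h_{t[q]}$. Observe also that $A=\erz{e_1,\ldots,e_d}=\alpha\mathbb N^d$, so $n-h_t\in A$ forces every coordinate of $n-h_t$ to lie in $\alpha\mathbb N$; this is the only other ingredient needed.

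For part $(1)$, I would take $q\in\supp(I_t)\setminus\supp(mT)$ and choose $x\in\Gamma_t$ witnessing $x_{[q]}\not=h_{t[q]}$; this in particular forces $\Gamma_t\not=\{0\}$, so Lemma~\ref{Stechnisch} is applicable. If $h_{t[q]}>0$, then Lemma~\ref{Stechnisch}~$(5)$ would give $h_{t[q]}=x_{[q]}$, a contradiction; hence $h_{t[q]}=0$. Since $q\notin\supp(mT)$ means $n_{[q]}=h_{t[q]}$, we conclude $n_{[q]}=0$.

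For part $(2)$, I would take $q\in\supp(mT)$, so $n_{[q]}\not=h_{t[q]}$, which again forces $\Gamma_t\not=\{0\}$. Applying Lemma~\ref{Stechnisch}~$(5)$ to $x=n$ shows $h_{t[q]}=0$ (otherwise $h_{t[q]}=n_{[q]}$). Then $n_{[q]}=n_{[q]}-h_{t[q]}\in\alpha\mathbb N$ by the remark above on $A$, and it is nonzero, so $n_{[q]}\geq\alpha$; on the other hand $n\in\Gamma_t\subseteq\boxx(B)$ gives $n_{[q]}\leq\alpha$. Hence $n_{[q]}=\alpha$.

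There is no real obstacle here: the entire content is packaged in Lemma~\ref{Stechnisch}~$(5)$, the box condition $\Gamma_t\subseteq\boxx(B)$, and the identity $G(A)=\alpha\mathbb Z^d$. The only points requiring a little care are the bookkeeping identification of $\supp(I_t)$ and $\supp(mT)$ with statements about the coordinates of elements of $\Gamma_t$, and the (automatic) verification that whenever the relevant support set is nonempty one has $\Gamma_t\not=\{0\}$, so that Lemma~\ref{Stechnisch} genuinely applies.
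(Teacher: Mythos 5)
Your proof is correct and uses the same ingredients as the paper: Lemma~\ref{Stechnisch}~$(5)$ to see that $h_{t[q]}=0$ for $q\in\supp(I_t)$, the box condition $n_{[q]}\leq\alpha$, and the fact that $n-h_t\in A=\alpha\mathbb N^d$. The only difference is cosmetic: your part~$(1)$ argues directly via $q\notin\supp(mT)\Leftrightarrow n_{[q]}=h_{t[q]}$, whereas the paper runs the same facts through a proof by contradiction.
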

\begin{proof}

$(1)$ Suppose to the contrary that there is a $q\in(\supp(I_t)\setminus \supp(mT))\not=\emptyset$ such that $n_{[q]}>0$. Since $q\in \supp(I_t)$ we have $h_{t[q]}=0$ by Lemma~\ref{Stechnisch} $(5)$, and therefore $n_{[q]}=\alpha$, since $h_{t[q]}-n_{[q]}\in\alpha\mathbb Z$ and $n_{[q]}\leq\alpha$. This implies $q\in \supp(mT)$ which is a contradiction.\\
$(2)$ Let $q\in \supp(mT)$; we have $n_{[q]}\geq\alpha$. Moreover, we get $n_{[q]}\leq\alpha$, since $\Gamma_t\subseteq\boxx(B)$.
\end{proof}

The above Lemma is false in general. For the affine semigroup $B$ in Example~\ref{Sbsprechnung} we have  $\Gamma_4=\{(6,2),(2,6)\}$, that is, $h_4=(2,2)$, and $\tilde\Gamma_4=\{y_1,y_2\}$. For $n\in\Gamma_4$ we get that $n_{[i]}>0$ for $i=1,2$. But $\supp(I_4)=\{1,2\}$ and $\#\supp(y_1T)=\#\supp(y_2T)=1$. As a consequence of the next proposition the Eisenbud-Goto conjecture holds if $B$ is seminormal.

\begin{prop}\label{Segsngamma}

Let $\Gamma_t\in\Gamma(B)$ for some $t\in\{1,\ldots,f\}$. If $\Gamma_t\subseteq \boxx(B)$, then
$$
\reg K[B]\leq \deg K[B]-\codim K[B].
$$

\end{prop}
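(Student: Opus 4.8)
The plan is to bound $\reg I_t + \deg h_t$ from above by $f - c = \deg K[B] - \codim K[B]$, where $\Gamma_t \in \Gamma(B)$ and $\Gamma_t \subseteq \boxx(B)$, using Hoa--Trung (Theorem~\ref{Shoa}) on one side and a counting argument in the spirit of Proposition~\ref{Segred} on the other. First I would dispose of the case $\#\Gamma_t = 1$: then $I_t = T$, $\reg I_t = 0$, and the bound follows from Proposition~\ref{Segred} applied to any element of $\Gamma_t \subseteq B_A$ (indeed $\deg h_t = \deg x$ for the unique $x \in \Gamma_t$, since $x - h_t \in A$ forces $x = h_t$ when $\#\Gamma_t=1$ by Lemma~\ref{Stechnisch}). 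So assume $\#\Gamma_t \geq 2$, hence $I_t$ is a proper squarefree ideal with $\var(I_t) = \#\supp(I_t) \geq 2$ and $\deg h_t \geq 1$.

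The heart of the argument is to produce $\var(I_t) + \codim K[B]$ pairwise inequivalent elements of $B_A$. On one hand, Theorem~\ref{Shoa} gives $\reg I_t \leq \var(I_t) - \height I_t + 1 \leq \var(I_t) - 1$. On the other hand, I want to exhibit enough elements of $B_A$ in the class $\Gamma_t$ (together with the $c$ elements $0, a_1, \ldots, a_c$ from Remark~\ref{Sa1bisc}, which are pairwise inequivalent and inequivalent to everything in $\Gamma_t$ unless they lie in $\Gamma_t$, a case that must be tracked carefully) to force $f \geq \var(I_t) + c + (\text{something accounting for } \deg h_t)$. The natural source of inequivalent elements inside $\Gamma_t$: pick $n \in \Gamma_t$ and a $*$-sequence $\La \in \LA_n$; by Lemma~\ref{S*prop}(2) the $\deg n$ partial sums $n(\La,0), \ldots, n(\La, \deg n - 1)$ are pairwise inequivalent elements of $B_A$. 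Since $\deg n = \deg h_t + \deg m$ where $m = y^{(n-h_t)/\alpha}$, and I can choose $n$ so that $\deg m$ equals the maximal degree of a generator of $I_t$, I get $\deg h_t + (\text{max generator degree})$ inequivalent elements. The key point is that the intermediate elements $n(\La, i)$ for $i$ near $\deg n$ will have small coordinates and be inequivalent to the $a_j$'s as well.

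I expect the main obstacle to be reconciling the two bounds when $\reg I_t$ exceeds the maximal generator degree of $I_t$ — precisely the phenomenon illustrated just before the lemma, where $I = (y_1y_2, y_3y_4)$ has $\reg I = 3 > 2$. In that regime the naive chain of partial sums from a single $n$ gives only $\deg h_t + 2$ inequivalent elements, not enough. The fix will be to use Lemma~\ref{Shilfe2} and Lemma~\ref{Skey}: the variables in $\supp(I_t)$ correspond, via Lemma~\ref{Skey}, to coordinates of elements of $\Gamma_t$ that are either $0$ or $\alpha$, and by choosing different generators $m, m' \in \tilde\Gamma_t$ whose supports differ one obtains $n, n' \in \Gamma_t$ differing in those coordinates; running the $*$-sequences through positions where, by Lemma~\ref{Shilfe}, the $q$-th coordinate drops strictly between $0$ and $\alpha$, one produces additional elements of $B_A$ inequivalent to the previously constructed ones. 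Organizing these into a single set $L \subseteq B_A$ of size $\geq \var(I_t) + c + \deg h_t \geq \reg I_t + 1 + c + \deg h_t - 1 = \reg I_t + \deg h_t + c$ will give $f = \deg K[B] \geq \#L \geq \reg K[B] + \codim K[B]$, which is the claim. I would carry out the combinatorics by induction on $\#\supp(I_t)$, peeling off one support variable at a time and invoking Lemma~\ref{Shilfe2} to guarantee the needed intermediate coordinate values, while carefully checking at each stage that the newly added element is inequivalent to all earlier ones (comparing coordinates in $D$, as in Remark~\ref{Sa1bisc} and Lemma~\ref{S*prop}(2)).
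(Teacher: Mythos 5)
Your proposal is essentially the paper's own proof: dispose of $\#\Gamma_t=1$ via Proposition~\ref{Segred}, bound $\reg I_t$ by Theorem~\ref{Shoa}, and build a set of pairwise inequivalent elements of $B_A$ out of $*$-sequences using Lemma~\ref{Skey} and Lemma~\ref{Shilfe2}, inducting over the generators $m_1,\ldots,m_{\#\Gamma_t}$ of $I_t$ and appending $\{0,a_1,\ldots,a_c\}$ at the end. The one quantitative correction: a set of size $\var(I_t)+\deg h_t+c$ is not attainable in general (for $I_t=(y_k,y_l,y_m)T$ the chains coming from the three generators contribute only $\deg h_t$ inequivalent elements of degree at least $2$), so you must track the height of the partial ideals $J_k=(m_1,\ldots,m_k)T$ as well as their support --- the paper's invariant is $\#L_k\geq\var(J_k)-\height J_k+1+\deg h_t-1$, which loses one element whenever $\supp(m_{k+1}T)$ is disjoint from $\supp(J_k)$ but compensates because the height then rises by $1$; the resulting count $\#L\geq\var(I_t)-\height I_t+1+\deg h_t+c$ still dominates $\reg I_t+\deg h_t+c$ by Theorem~\ref{Shoa}, which is all you need.
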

\begin{proof}

By construction we need to show that $\reg I_t+\deg h_t\leq\deg K[B]-c$. If $\#\Gamma_t=1$ the assertion follows from Proposition~\ref{Segred}. Let $\#\Gamma_t\geq2$, equivalently, $I_t$ is a proper ideal, see Remark~\ref{Sgeq2}. We have $\Gamma_t=\{n_1,\ldots,n_{\#\Gamma_t}\}$ and $\tilde\Gamma_t=\{m_1,\ldots,m_{\#\Gamma_t}\}$; we may assume that $m_i=y^{(n_i-h_t)/\alpha}$. We set $J_k:=(m_1,\ldots,m_k)T$ and $g(k):=\var (J_k)-\height J_k+1+\deg h_t$ for $k\in\mathbb N$ with $1\leq k\leq \#\Gamma_t$. Note that $J_{\#\Gamma_t}=I_t$, moreover, $J_k$ is a (proper) squarefree monomial ideal in $T$, since $\Gamma_t\subseteq \boxx(B)$, hence $\var (J_k)=\#\supp(J_k)$. We show by induction on $k\in\mathbb N$ with $1\leq k\leq \#\Gamma_t$ that there is a set $L_k$ with the following properties
\begin{enumerate} 

\item[$(i)$] $L_k\subseteq B_A$.

\item[$(ii)$] $\#L_k\geq g(k)-1$.

\item[$(iii)$] $x\not\sim y$ for all $x,y\in L_k$ with $x\not=y$.

\item[$(iv)$] $\deg x\geq2$ for all $x\in L_k$.

\item[$(v)$] $x_{[q]}=0$ for all $x\in L_k$ and for all $q\in \supp(I_t)\setminus \supp(J_k)$.

\end{enumerate}
Let $k=1$. We have $\height J_1=1$ and $\var(J_1)+\deg h_t=\deg n_1$, that is, $g(1)= \deg n_1$. Fix a $\La\in\LA_{n_1}$ and set 
$$
L_1:=\{n_1(\La,0),\ldots,n_1(\La,\deg n_1-2)\},
$$ 
clearly $\#L_1= \deg n_1-1 = g(1)-1$, hence $(ii)$ is satisfied and by construction we get property $(iv)$. By Lemma~\ref{S*prop} $(1)$ $L_1\subseteq B_A$ which shows $(i)$, and by Lemma~\ref{S*prop} $(2)$ property $(iii)$ holds. By Lemma~\ref{Skey} $(1)$ we get $n_1(\La,0)_{[q]}=0$ for all $q\in \supp(I_t)\setminus \supp(J_1)$, hence $(v)$ holds by construction of $L_1$.

Using induction on $k\leq \#\Gamma_t-1$ the properties $(i)$-$(v)$ hold for $L_k$. We define the set $J:=\supp(m_{k+1}T)\setminus \supp(J_k)$. By Lemma~\ref{Skey} $(2)$ we get $n_{k+1[q]}=\alpha$ for all $q\in\supp(m_{k+1}T)$. Since $n_{k+1}\in B_A$ it follows that $\deg n_{k+1}\geq\#\supp(m_{k+1}T)+1$. Moreover, since $n_{k+1[q]}=\alpha$ for all $q\in J$ we can fix by Lemma~\ref{Shilfe2} a $\La\in\LA_{n_{k+1}}$ with the property: for all $p=1,\ldots,\#J$ there is a $q\in J$ with $0<n_{k+1}(\La,p)_{[q]}<\alpha$. There could be two different cases:\smallskip\\
\underline{Case 1:} $\supp(J_k)\cap \supp(m_{k+1}T)\not=\emptyset$.\quad(\mbox{e.\,g.}, $k=2, J_2=(y_1y_2,y_2y_3y_4)T$, and $m_3=y_4y_5y_6$.)\\
Set 
$$
L_{k+1}:=L_k\cup\{n_{k+1}(\La,1),\ldots,n_{k+1}(\La,\#J)\}.
$$
In case that $J=\emptyset$ we set $L_{k+1}:=L_k$.\\ 
$(iii)$ By induction we get $x\not\sim y$ for all $x,y\in L_k$ with $x\not=y$, moreover, $n_{k+1}(\La,i)\not\sim n_{k+1}(\La,j)$ for all $i,j\in\mathbb N$ with $0\leq i<j\leq\deg n_{k+1}$ by Lemma~\ref{S*prop} $(2)$. Fix an $x\in L_k$ and let $p\in\{1,\ldots,\#J\}$. By property $(v)$ $x_{[q]}=0$ for all $q\in J$, moreover, there is a $q\in J$ such that $0<n_{k+1}(\La,p)_{[q]}<\alpha$, hence $x\not\sim n_{k+1}(\La,p)$. Thus, property $(iii)$ is satisfied. This also shows that $\#L_{k+1}=\#L_k+\#J$.\\
$(i)$ By Lemma~\ref{S*prop} $(1)$ $n_{k+1}(\La,1),\ldots,n_{k+1}(\La,\#J)\in B_A$, since $n_{k+1}\in B_A$.\\
$(iv)$ Since $\#\supp(m_{k+1}T)\geq \#J+1$ we obtain $\deg n_{k+1}\geq\#J+2$. Hence $(iv)$ holds by construction.\\ 
$(v)$ By induction $x_{[q]}=0$ for all $x\in L_k$ and for all $q\in (\supp(I_t)\setminus \supp(J_k))\supseteq (\supp(I_t)\setminus \supp(J_{k+1}))$. By Lemma~\ref{Skey} $(1)$ we have $n_{k+1[q]}=0$ for all $q\in (\supp(I_t)\setminus \supp (m_{k+1}T))\supseteq (\supp(I_t)\setminus \supp(J_{k+1}))$, hence property $(v)$ holds by construction.\\ 
$(ii)$ Since $\supp(J_{k+1})=\supp(J_k)\cup\supp(m_{k+1}T)$ we get $\var(J_{k+1}) = \var(J_k)+\#J$. We have $\height J_{k+1}\geq\height J_k$ and therefore
$$
g(k+1) -1\leq \#J + \var(J_{k})-\height J_{k}+1+\deg h_t -1= \#J + g(k) - 1\leq \#J +\#L_k=\#L_{k+1}.
$$
\underline{Case 2:} $\supp (J_k)\cap \supp (m_{k+1}T)=\emptyset$.\quad(\mbox{e.\,g.}, $k=2, J_2=(y_1y_2,y_2y_3y_4)T$, and $m_3=y_5y_6y_7$.)\\
Note that $J=\supp(m_{k+1}T)$, in particular, $\#J\geq1$. Set
$$
L_{k+1}:=L_k\cup\{n_{k+1}(\La,1),\ldots,n_{k+1}(\La,\#J-1)\}.
$$
In case that $\#J=1$ we set $L_{k+1}:=L_k$.\\
$(iii), (i), (iv), (v)$ Analogous, replace $\#J$ by $\#J-1$ in the corresponding proofs in the first case. Moreover, $\#L_{k+1}=\#L_k+\#J-1$ by construction.\\
$(ii)$ We also have $\var(J_{k+1}) = \var(J_k)+\#J$. Since $\supp(J_k)\cap \supp(m_{k+1}T)=\emptyset$ we get that $m_{k+1}+J_k$ is a non-zero-divisor of $T/J_k$. Hence $\height J_{k+1}=\height J_k +1$ by Krull's Principal Ideal Theorem, see \cite[Theorem~10.1]{SEB}, and therefore
$$
g(k+1) -1= \#J + \var(J_{k})-\height J_{k}-1+1+\deg h_t -1= \#J + g(k) - 2\leq \#J+ \#L_k-1=\#L_{k+1}.
$$
By this we obtain a set $L_{\#\Gamma_t}$ with the above properties, in particular 
\begin{equation}\label{Skeyse}
\#L_{\#\Gamma_t}\stackrel{(ii)}{\geq} g(\#\Gamma_t)-1 = \var (I_t)-\height I_t+1+\deg h_t-1\geq\reg I_t+\deg h_t-1,
\end{equation}
by Theorem~\ref{Shoa}. We get a set
$$
L:=L_{\#\Gamma_t} \cup\{0,a_1,\ldots,a_c\},
$$
with $L\subseteq B_A$ such that $x\not\sim y$ for all $x,y\in L$ with $x\not= y$ by $(i)$, $(iii)$, and Remark~\ref{Sa1bisc}. Since $\deg K[B]=f$ (see Proposition~\ref{Szerlmi}) we have
$$
\deg K[B]\geq \#L\stackrel{(iv)}{=}\#L_{\#\Gamma_t}+c+1\stackrel{\mbox{\scriptsize{(\ref{Skeyse})}}}{\geq} \reg I_t+\deg h_t+c = \reg K[B]+c.
$$ 
\end{proof}

We therefore obtain from Theorem~\ref{Scharsn} and Proposition~\ref{Segsngamma} the following main result:

\begin{satz}\label{Segsn}

If $B$ is seminormal, then
$$
\reg K[B]\leq \deg K[B]-\codim K[B].
$$

\end{satz}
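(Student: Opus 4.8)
The plan is to deduce Theorem~\ref{Segsn} directly from the machinery already assembled, so the proof itself is very short once Proposition~\ref{Segsngamma} is in hand. First I would recall that, by Theorem~\ref{Scharsn}, the hypothesis that $B$ is seminormal is equivalent to $B_A\subseteq\boxx(B)$. Since each equivalence class $\Gamma_t$ is contained in $B_A$, this forces $\Gamma_t\subseteq\boxx(B)$ for \emph{every} $t\in\{1,\ldots,f\}$. In particular this holds for whichever $t$ realizes the regularity, i.e.\ for some $\Gamma_t\in\Gamma(B)$ — and such a $t$ exists because $\Gamma(B)\neq\emptyset$ by Equation~(\ref{Sregberechnung}).

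The second and final step is to invoke Proposition~\ref{Segsngamma} with this $t$: the proposition says precisely that if $\Gamma_t\in\Gamma(B)$ and $\Gamma_t\subseteq\boxx(B)$, then $\reg K[B]\leq\deg K[B]-\codim K[B]$. Combining the two steps yields the claim. So the body of the proof is essentially: pick $\Gamma_t\in\Gamma(B)$, observe $\Gamma_t\subseteq\boxx(B)$ by Theorem~\ref{Scharsn}, apply Proposition~\ref{Segsngamma}. There is no real obstacle at this stage — all the work has been front-loaded into Proposition~\ref{Segsngamma} (and, for the $\codim\,\pm$ bookkeeping, into Proposition~\ref{Segred} and Remark~\ref{Sa1bisc}).

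If I were instead asked to reconstruct the hard part, namely Proposition~\ref{Segsngamma}, the main obstacle would be the inductive construction of the sets $L_k$ tracking enough inequivalent elements of $B_A$ to certify that $\deg K[B]=f$ is large: one filters the squarefree ideal $I_t$ through the partial ideals $J_k=(m_1,\ldots,m_k)T$, and at each step uses Lemma~\ref{Skey} to see that the generator $m_{k+1}$ forces the corresponding $n_{k+1}$ to have coordinate $\alpha$ exactly on $\supp(m_{k+1}T)$, then uses Lemma~\ref{Shilfe2} to produce a $*$-sequence whose partial sums stay strictly inside $\boxx(B)$ in the new coordinates, guaranteeing inequivalence to everything already in $L_k$. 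The delicate point there is the case split on whether $\supp(J_k)\cap\supp(m_{k+1}T)$ is empty — in the disjoint case the height of $J_k$ genuinely increases (Krull's principal ideal theorem), which is why one can afford to throw away one of the new elements and still keep the counting function $g(k)$ under control. But for Theorem~\ref{Segsn} itself none of this recurs; it is a one-line corollary.

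\begin{proof}
By Equation~(\ref{Sregberechnung}) we have $\Gamma(B)\neq\emptyset$, so fix some $\Gamma_t\in\Gamma(B)$. Since $B$ is seminormal, $B_A\subseteq\boxx(B)$ by Theorem~\ref{Scharsn}, and because $\Gamma_t\subseteq B_A$ this gives $\Gamma_t\subseteq\boxx(B)$. Applying Proposition~\ref{Segsngamma} to this $t$ yields $\reg K[B]\leq\deg K[B]-\codim K[B]$.
\end{proof}
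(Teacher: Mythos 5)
Your proof is correct and is exactly the paper's argument: the paper derives Theorem~\ref{Segsn} by combining Theorem~\ref{Scharsn} (seminormality is equivalent to $B_A\subseteq\boxx(B)$, hence every $\Gamma_t\subseteq\boxx(B)$) with Proposition~\ref{Segsngamma} applied to some $\Gamma_t\in\Gamma(B)$. No issues.
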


Note that the bound of Theorem~\ref{Segsn} is again sharp. For $d=2$ and $\alpha\geq2$ we get that $\reg K[B_{2,\alpha}]=\floor{2-\frac{2}{\alpha}}=1$ and $\deg K[B_{2,\alpha}]-\codim K[B_{2,\alpha}]=\alpha-(\alpha+1)+2=1$, see Section~\ref{Sfullveronese}.

\section{Regularity of full Veronese rings}\label{Sfullveronese}

For $X,Y\subseteq\mathbb N^d$ we define $X+Y:=\{x+y\mid x\in X, y\in Y\}$,  $nX:=X+\ldots+X$ ($n$-times), and $0X:=0$. Recall that $M_{d,\alpha}=\{(u_{1},\ldots,u_{d})\in\mathbb N^d\mid \sum_{i=1}^{d}u_{i}=\alpha\}$ and $B_{d,\alpha}$ denotes the submonoid of $(\mathbb N^d,+)$ which is generated by $M_{d,\alpha}$. For example $B_{2,2}=\erz{(2,0), (0,2), (1,1)}$. We have 
\begin{equation}\label{Schar}
n M_{d,\alpha} = \left\{(u_{1},\ldots,u_{d})\in\mathbb N^d\mid\sum\nolimits_{i=1}^{d}u_{i}=n \alpha\right\},
\end{equation}
hence there is an isomorphism of $K$-vector spaces: $K[B_{d,1}]_{n\alpha}\cong K[B_{d,\alpha}]_{n}$. It is a well known fact that \mbox{$h_{K[B_{d,1}]} (n)= \binom{n+d-1}{d-1}$}, where $h_M$ denotes the Hilbert polynomial. This shows that $h_{K[B_{d,\alpha}]}(n) = h_{K[B_{d,1}]} (n\alpha) =  \binom{n\alpha+d-1}{d-1}$ and therefore $\deg K[B_{d,\alpha}] = \alpha^{d-1}$. Moreover, we get $\codim K[B_{d,\alpha}] = \binom{\alpha+d-1}{d-1}-d$, since $\#M_{d,\alpha}= \binom{\alpha+d-1}{d-1}$. The semigroups $B_{d,\alpha}$ are normal, hence the ring $K[B_{d,\alpha}]$ is Cohen-Macaulay by \cite[Theorem~1]{SMHCM} and therefore $\#\Gamma_t=1$ for all $t=1,\ldots,f$, see \cite[Theorem~6.4]{SRSHFGA}. It follows that
\begin{equation}\label{Sgleichheit}
\reg K[B_{d,\alpha}] = \red(K[B_{d,\alpha}]),
\end{equation}
by Equation~(\ref{Sregberechnung}). In the following we will compute the reduction number $\red(K[B_{d,\alpha}])$ which can also be computed by $\red(K[B_{d,\alpha}])=\min\maxk{r\in\mathbb N \mid rM_{d,\alpha}+\{e_1,\ldots,e_d\}=(r+1)M_{d,\alpha}}$, see \cite[Page~129,135]{SHSCM}.

\begin{lem}\label{Sgro}

Let $r\in\mathbb N$. The following assertions are equivalent:

\begin{enumerate}

\item $rM_{d,\alpha} + \{e_1,\ldots,e_d\} = (r+1) M_{d,\alpha}$.

\item $(r+1)\alpha > d(\alpha-1)$.

\end{enumerate}

\end{lem}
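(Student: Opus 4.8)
The plan is to prove the equivalence by analyzing when a given monomial $y^u$ with $\sum u_i = (r+1)\alpha$ can be written as $y^v \cdot y^{e_q}$ for some $v$ with $\sum v_i = r\alpha$ and some $q \in \{1,\dots,d\}$. Such a decomposition exists precisely when some coordinate $u_q$ is positive, i.e. when $u \neq 0$ in some coordinate --- but since $(r+1)\alpha \geq 1$ forces $u \neq 0$, this is automatic. The subtlety is that we need $v \in rM_{d,\alpha}$, which by Equation~(\ref{Schar}) just means $v \in \mathbb{N}^d$ with coordinate sum $r\alpha$; subtracting $e_q$ from a positive coordinate keeps $v$ in $\mathbb{N}^d$. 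So actually the inclusion $rM_{d,\alpha} + \{e_1,\dots,e_d\} \subseteq (r+1)M_{d,\alpha}$ is clear, and the reverse inclusion holds iff every $u \in (r+1)M_{d,\alpha}$ has a positive coordinate that can be ``turned into'' an $e_q$-summand --- wait, that's always true. Let me reconsider: the point must be more delicate, so I would instead re-examine the definition, where $\{e_1,\dots,e_d\} \subseteq M_{d,1}$ scaled, meaning $e_q$ has coordinate sum $\alpha$, not $1$, in the identification where $M_{d,\alpha}$ lives in degree $1$.

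With that correct reading, $e_q = \alpha \cdot (\text{standard basis vector})$ has coordinate sum $\alpha$, so $rM_{d,\alpha} + \{e_1,\dots,e_d\}$ consists of $u \in \mathbb{N}^d$ with $\sum u_i = (r+1)\alpha$ such that $u - \alpha \mathbf{1}_q \in rM_{d,\alpha}$ for some $q$, i.e. $u_q \geq \alpha$ for some $q$. Thus assertion $(1)$ says: every $u \in \mathbb{N}^d$ with $\sum_{i=1}^d u_i = (r+1)\alpha$ has some coordinate $\geq \alpha$. The negation is: there exists $u \in \mathbb{N}^d$ with all $u_i \leq \alpha - 1$ and $\sum u_i = (r+1)\alpha$. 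Such a $u$ exists iff $(r+1)\alpha \leq d(\alpha - 1)$, by a standard pigeonhole/filling argument: the maximum achievable coordinate sum under the constraint $u_i \leq \alpha-1$ is $d(\alpha-1)$, and any value from $0$ up to that maximum is attainable in $\mathbb{N}^d$. Hence $(1)$ holds iff $(r+1)\alpha > d(\alpha-1)$, which is $(2)$.

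So the key steps, in order, are: first, carefully unwind the definitions to see that $(1)$ is equivalent to the combinatorial statement ``every nonnegative integer vector of length $d$ with entry-sum $(r+1)\alpha$ has a coordinate $\geq \alpha$''; second, observe that the failure of this statement is equivalent to the existence of a vector with all entries in $\{0,\dots,\alpha-1\}$ summing to $(r+1)\alpha$; third, note that such a vector exists exactly when $(r+1)\alpha \leq d(\alpha-1)$, since the attainable entry-sums for vectors in $\{0,\dots,\alpha-1\}^d$ are precisely $0,1,\dots,d(\alpha-1)$; fourth, negate to conclude. The argument in step three is elementary: for the ``if'' direction fill coordinates greedily with $\alpha-1$ until the remaining budget drops below $\alpha-1$, then place the remainder; for ``only if'' the sum is trivially bounded by $d(\alpha-1)$.

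The main obstacle --- such as it is --- is purely bookkeeping: making sure the identification of $e_q$ with an element of coordinate-sum $\alpha$ (rather than $1$) is used consistently, since the whole equivalence hinges on comparing $(r+1)\alpha$ against $d(\alpha-1)$ rather than against $d$. Once the translation to the combinatorial statement is pinned down correctly, the rest is the standard observation that $\{0,1,\dots,\alpha-1\}^d$ realizes every integer sum from $0$ to $d(\alpha-1)$, which requires no real computation. I would present the proof as: establish $(1) \Leftrightarrow$ (no $u$ with all $u_i < \alpha$ and $\sum u_i = (r+1)\alpha$) $\Leftrightarrow$ $(r+1)\alpha > d(\alpha-1)$.
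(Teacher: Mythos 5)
Your proof is correct and follows essentially the same route as the paper: both reduce assertion $(1)$ to the statement that every $u\in\mathbb N^d$ with coordinate sum $(r+1)\alpha$ has a coordinate $\geq\alpha$, and both settle the equivalence with $(2)$ by noting that a counterexample (all coordinates $\leq\alpha-1$) exists precisely when $(r+1)\alpha\leq d(\alpha-1)$. The false start in your first paragraph (reading $e_q$ as having coordinate sum $1$) is corrected within your own argument, so nothing is lost.
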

\begin{proof}

$(1) \Rightarrow (2)$ Assume that $0\leq (r+1)\alpha \leq d(\alpha-1)$. There is an element $x\in\mathbb N^d$ with $x_{[j]}\leq\alpha-1$ for all $j=1,\ldots,d$ and $\sum_{j=1}^{d}x_{[j]}=(r+1)\alpha$. We have $x\in (r+1)M_{d,\alpha}$ by Equation~(\ref{Schar}). Suppose that $x \in rM_{d,\alpha} + \{e_1,\ldots,e_d\}$ we get $x = x' + e_j$ for some $x'\in\mathbb N^d$ and some $j\in\{1,\ldots,d\}$ which contradicts $x_{[j]}\leq\alpha-1$. Hence $x \notin rM_{d,\alpha} + \{e_1,\ldots,e_d\}$.\\
$(2) \Rightarrow (1)$ Let $x\in (r+1)M_{d,\alpha}$. Suppose that $x_{[j]}\leq\alpha-1$ for all $j=1,\ldots,d$. We get $(r+1)\alpha = \sum_{j=1}^{d} x_{[j]} \leq d(\alpha-1)$. Thus, $x_{[j]}\geq\alpha$ for some $j\in\{1,\ldots,d\}$ and therefore $x-e_j\in rM_{d,\alpha}$ by Equation~(\ref{Schar}). Hence $(r+1)M_{d,\alpha} \subseteq rM_{d,\alpha} + \{e_1,\ldots,e_d\}$, that is, $(r+1)M_{d,\alpha} = rM_{d,\alpha} + \{e_1,\ldots,e_d\}$ and we are done.
\end{proof}

\begin{satz}\label{Sberechnung}

We have
$$
\mbox{$\reg K[B_{d,\alpha}] = \floor{d - \frac{d}{\alpha}}$}.
$$

\end{satz}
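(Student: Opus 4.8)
The plan is to combine Equation~(\ref{Sgleichheit}), which says $\reg K[B_{d,\alpha}] = \red(K[B_{d,\alpha}])$, with the reformulation of the reduction number given just before Lemma~\ref{Sgro}, namely $\red(K[B_{d,\alpha}]) = \min\maxk{r\in\mathbb N \mid rM_{d,\alpha}+\{e_1,\ldots,e_d\}=(r+1)M_{d,\alpha}}$. By Lemma~\ref{Sgro} this minimum equals $\min\maxk{r\in\mathbb N \mid (r+1)\alpha > d(\alpha-1)}$. So the whole proof reduces to a purely arithmetic computation: show that the least integer $r\geq 0$ with $(r+1)\alpha > d(\alpha-1)$ equals $\floor{d-\frac{d}{\alpha}}$.

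First I would rewrite the inequality $(r+1)\alpha > d(\alpha-1)$ as $r+1 > d - \frac{d}{\alpha}$, i.e. $r > d - \frac{d}{\alpha} - 1$. The smallest integer $r$ satisfying a strict inequality $r > s$ is $\floor{s}+1$ when $s\notin\mathbb Z$, and $s+1$ when $s\in\mathbb Z$; in either case it equals $\floor{s}+1$ provided $s\notin\mathbb Z$, and one must be slightly careful in the integer case. Here $s = d - \frac{d}{\alpha} - 1$, so $\floor{s} = \floor{d-\frac{d}{\alpha}} - 1$ (since subtracting the integer $1$ commutes with the floor). Thus the minimal $r$ is $\floor{d-\frac{d}{\alpha}}$, unless $s$ happens to be an integer — which occurs exactly when $\alpha \mid d$, in which case $\frac{d}{\alpha}\in\mathbb Z$, $s = d-\frac{d}{\alpha}-1\in\mathbb Z$, and $d - \frac{d}{\alpha}$ is already an integer so $\floor{d-\frac{d}{\alpha}} = d-\frac{d}{\alpha}$; then the minimal $r$ with $r > s = (d-\frac{d}{\alpha})-1$ is again $d-\frac{d}{\alpha} = \floor{d-\frac{d}{\alpha}}$. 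So in all cases the answer is $\floor{d-\frac{d}{\alpha}}$, and I would present the two cases ($\alpha\nmid d$ and $\alpha\mid d$) separately to keep the floor manipulations transparent.

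One should also note that this minimal $r$ is genuinely a nonnegative integer, so that the set over which the minimum is taken is nonempty and the formula makes sense: since $d\geq 1$ and $\alpha\geq 1$ we have $d - \frac{d}{\alpha}\geq 0$, hence $\floor{d-\frac{d}{\alpha}}\geq 0$; and the set is nonempty because $(r+1)\alpha\to\infty$. I would remark this briefly. The computation in Lemma~\ref{Sgro} already did the semigroup-theoretic work, so there is essentially no obstacle here beyond bookkeeping with floors; the only place to be mildly attentive is the boundary case $\alpha\mid d$, where one must check that the strict inequality does not accidentally shift the answer up by one. Concretely the argument runs:

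\begin{proof}
By Equation~(\ref{Sgleichheit}) and the description of $\red(K[B_{d,\alpha}])$ recalled above,
$$
\reg K[B_{d,\alpha}] = \min\maxk{r\in\mathbb N \mid rM_{d,\alpha}+\{e_1,\ldots,e_d\}=(r+1)M_{d,\alpha}}.
$$
By Lemma~\ref{Sgro} the condition defining this set is equivalent to $(r+1)\alpha > d(\alpha-1)$, that is, to $r > d - \frac{d}{\alpha} - 1$. Hence
$$
\reg K[B_{d,\alpha}] = \min\maxk{r\in\mathbb N\mid r > d - \tfrac{d}{\alpha} - 1}.
$$
Since $d\geq1$ and $\alpha\geq1$ we have $d - \frac{d}{\alpha}\geq0$, so this set is a nonempty set of nonnegative integers. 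If $\alpha\nmid d$, then $d-\frac{d}{\alpha}\notin\mathbb Z$, so $d - \frac{d}{\alpha} - 1\notin\mathbb Z$ and the least integer exceeding it is $\floor{d-\frac{d}{\alpha}-1}+1 = \floor{d-\frac{d}{\alpha}}-1+1 = \floor{d-\frac{d}{\alpha}}$. If $\alpha\mid d$, then $d-\frac{d}{\alpha}\in\mathbb Z$ and $\floor{d-\frac{d}{\alpha}} = d-\frac{d}{\alpha}$; the least integer $r$ with $r > (d-\frac{d}{\alpha})-1$ is then $d-\frac{d}{\alpha} = \floor{d-\frac{d}{\alpha}}$. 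In both cases $\reg K[B_{d,\alpha}] = \floor{d-\frac{d}{\alpha}}$.
\end{proof}
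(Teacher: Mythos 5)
Your proposal is correct and follows essentially the same route as the paper: reduce to the reduction number via Equation~(\ref{Sgleichheit}), apply Lemma~\ref{Sgro}, and then do the floor arithmetic (the paper verifies the two defining inequalities for $r=\floor{d-\frac{d}{\alpha}}$ and $r=\floor{d-\frac{d}{\alpha}}-1$ directly rather than splitting into the cases $\alpha\mid d$ and $\alpha\nmid d$, but the content is identical).
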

\begin{proof}

By Equation~(\ref{Sgleichheit}) we need to show that $\red(K[B_{d,\alpha}]) = \floor{d - \frac{d}{\alpha}}$. We have 
$$
\mbox{$\left(\floor{d - \frac{d}{\alpha}}+1\right) \alpha > (d - \frac{d}{\alpha}) \alpha = d(\alpha-1)$},
$$
hence $\red(K[B_{d,\alpha}]) \leq \floor{d - \frac{d}{\alpha}}$ by Lemma~\ref{Sgro}. We may assume that \mbox{$\floor{d - \frac{d}{\alpha}}\geq1$}. We get
$$
\mbox{$\left(\floor{d - \frac{d}{\alpha}}-1+1\right) \alpha \leq \left(d - \frac{d}{\alpha}\right)\alpha = d (\alpha-1)$},
$$
hence $\red(K[B_{d,\alpha}]) > \floor{d - \frac{d}{\alpha}} -1$ by Lemma~\ref{Sgro} and we are done.
\end{proof}

\begin{bsp}\label{Sbadeg}

By Theorem~\ref{Sberechnung} we are able to compute the Castelnuovo-Mumford regularity of full Veronese rings. For $B_{20,2}$ we know that $\reg K[B_{20,2}] = \floor{20 - \frac{20}{2}} = 10$. Moreover, we have $\deg K[B_{20,2}]-\codim K[B_{20,2}] = 2^{19} - \binom{2+19}{19}+20 = 524098$.

\end{bsp}

\section*{Acknowledgement}

The author would like to thank J\"urgen St\"uckrad and L$\hat{\mbox e}$ Tu$\hat{\mbox a}$n Hoa for many helpful discussions.

\end{document}